\renewcommand{\vec}[1]{\boldsymbol{#1}}
\newtheorem{thm}{Theorem}[section]
\newtheorem{assump}[thm]{Assumption}
\newtheorem{lem}[thm]{Lemma}
\newtheorem{rmk}[thm]{Remark}
\newtheorem{prop}[thm]{Proposition}
\numberwithin{equation}{section}
\xdef\csname \x cal\endcsname{{\noexpand\mathcal{\x}}}
\xdef\csname \x bf\endcsname{{\noexpand\mathbf{\x}}}
\xdef\csname \x bb\endcsname{{\noexpand\mathbb{\x}}}
\newcommand{\p}{\partial}
\newcommand{\diag}{\text{\rm diag}}
\newcommand{\spann}{\text{\rm span}}
\newcommand{\image}{\text{Im}}
\newcommand{\normsubscript}{2}
\DeclarePairedDelimiter{\norm}{\|}{\|}
\newcommand{\normtwo}[1]{\norm*{#1}_{\normsubscript}}
\newcommand{\wnk}{\vec{\mathrm{k}}}
\newcommand{\wnkk}{\mathrm{k}}
\newcommand{\conjT}{\dagger}
\newcommand{\imagunit}{\mathrm{i}}
\newcommand{\mattrivial}{\Mcal_{\text{Trivial}}}
\newcommand{\matcrossdof}{\Mcal_{\text{CrossDoF}}}
\def\tsc#1{\csdef{#1}{\textsc{\lowercase{#1}}\xspace}}
\begin{document}
\let\WriteBookmarks\relax
\def\floatpagepagefraction{1}
\def\textpagefraction{.001}

\shorttitle{}
\shortauthors{C. Jin and H. Xie}

\title [mode = title]{A Robust GPU-Accelerated Kernel Compensation Solver with Novel Discretization for Photonic Crystals in Anisotropic Media}

\tnotemark[1]

\tnotetext[1]{This work was supported by Strategic Priority Research Program of the Chinese Academy of Sciences (XDB0640000, XDB0640300, XDB0620203, XDA0480504), National Key Research and Development Program of China (2023YFB3309104), Natural Science Foundations of China (No. 1233000214), National Key Laboratory of Computational Physics (No. 6142A05230501), National Science Challenge Project (TZ2025007), National Center for Mathematics and Interdisciplinary Science, CAS.}

\author[1]{Chenhao Jin}[orcid=0009-0001-5857-5400]

\ead{kanit@mail.ustc.edu.cn}

\affiliation[1]{organization={School of Mathematical Sciences, University of Science and Technology of China},city={Hefei},state={Anhui},postcode={230026},country={China}}

\author[2,3]{Hehu Xie}[orcid=0000-0002-1947-9256]
\cormark[1]

\ead{hhxie@lsec.cc.ac.cn}

\affiliation[2]{organization={SKLMS, Academy of Mathematics and Systems Science, Chinese Academy of Sciences},addressline={No.55, Zhongguancun Donglu}, city={Beijing},postcode={100190}, country={China}}

\affiliation[3]{organization={School of Mathematical Sciences, University of Chinese Academy of Sciences},
city={Beijing}, postcode={100049},country={China}}

\cortext[1]{Corresponding author}

\begin{abstract}
This paper develops a robust solver for the Maxwell eigenproblem in 3D photonic crystals with anisotropic media. The solver employs the kernel compensation technique under the framework of Yee's scheme to eliminate null space and enable matrix-free, GPU-accelerated operations via 3D discrete Fourier transform. Furthermore, we propose a novel discretization for permittivity tensor containing off-diagonal entries and prove that the resulting matrix is Hermitian positive definite, which ensures the correctness of the kernel compensation technique. Numerical experiments on several benchmark examples are demonstrated to validate the robustness and accuracy of our scheme.
\end{abstract}


\begin{keywords}
Photonic crystals \sep Anisotropic media \sep Yee's scheme \sep Kernel compensation method \sep Matrix-free operations \sep GPU acceleration
\end{keywords}

\maketitle

\section{Introduction}\label{se:intro}
Photonic crystals (PCs) are periodic dielectric or metallic nanostructures recognized as a cornerstone of modern photonics \cite{joannopoulos2008book}. By offering a mechanism to create photonic bandgaps, PCs provide special capabilities to mold the flow of light, which is essential for developing integrated devices such as efficient spectral filters, on-chip lasers, and optical transistors. The mathematical challenge lies in effectively solving the time-harmonic Maxwell's equations for dealing with null space and numerical accuracy. 
The governing system, defined in $\mathbb{R}^3$, starts with:
\begin{align}\label{display:intro_maxwell_magnetic}
\left\{
\begin{aligned}	
&\nabla\times(\varepsilon^{-1}\nabla\vec\times \vec H)=\omega^2\vec H,\quad&  \text{in  }\Rbb^3,\\
&\nabla\cdot\vec H=0, \quad& \text{in  }\Rbb^3.
\end{aligned}
\right.				
\end{align}
In this eigenvalue problem, $\omega$ is the frequency, $\vec H$ is the magnetic field and $\varepsilon$ is the normalized dielectric coefficient. According to Bloch's theorem, $\vec H$ satisfies the quasi-periodic condition 
\begin{align}\label{display:intro_quasiperiodic}
\vec H(\vec x+\vec a_i)=e^{\imagunit\wnk\cdot\vec a_i}\vec H(\vec x),\quad i=1,2,3, 
\end{align}
where $\wnk=(\wnkk_1,\wnkk_2,\wnkk_3)\in\Rbb^3$ is the wave number vector located in the first Brillouin zone, 
$\left\{\vec a_i\right\}_{i=1}^3$ are lattice translation vectors that span the primitive cell
\begin{align}\label{display:intro_domain}
\Omega:=\left\lbrace \vec x \in \Rbb^3: \vec x = \sum_{i=1}^3 x_i  \vec a_i,
\; x_i \in [0,1], \;i=1, 2, 3 \right\rbrace.
\end{align} 
One can find detailed definitions of $\{\vec a_i\}$ and $\wnk$ in \eqref{display:exp_latticeconst}, \eqref{display:exp_bzsym} in Section \ref{se:num}. We introduce the transformation $\vec H_{\wnk}(\vec x):=e^{-\imagunit\wnk\cdot\vec x}\vec H(\vec x)$ to convert the problem into the one with pure periodic boundary conditions involving the shifted nabla operator $\nabla_{\wnk}:=\nabla+\imagunit\wnk I$. To avoid ambiguity of notation, the subscript ``$\wnk$'' of ``$\vec H_{\wnk}$'' is left out. We keep using ``$\vec H$'' to refer to the magnetic field and its variants hereafter:
\begin{align}\label{display:intro_magneticshifted}
\left\{
\begin{aligned}					
&\nabla_{\wnk}\times(\varepsilon^{-1}\nabla_{\wnk}\times\vec  H)
=\omega^2\vec  H,\quad& \text{in }\Omega,\\
&\nabla_{\wnk}\cdot\vec  H=\vec 0, \quad& \text{in } \Omega, \\
&  \vec  H(\vec x+\vec a_i)=\vec  H(\vec x), \quad&i=1,2,3.
\end{aligned}
\right.
\end{align}
This transformation is common in variational approaches \cite{lu2022parallelpc} but is particularly advantageous for finite difference schemes, as the resulting simpler cyclic structure is crucial for 
implementing matrix-free operations.

The inverse permittivity $\varepsilon^{-1}$ is a piecewise constant defined by
\begin{align}\label{display:intro_permittivity}
\varepsilon^{-1}(\vec x)=\begin{cases}\varepsilon_1,\quad \vec x\in\Omega_1,\\ I_3,\quad \vec x\in\Omega_0:=\Omega\backslash\Omega_1,\end{cases}
\end{align}
where $\varepsilon_1$ is a $3\times 3$ Hermitian positive definite (HPD) matrix. The media of the system \eqref{display:intro_magneticshifted} is classified as isotropic when $\varepsilon_1$ is a scalar matrix, anisotropic when $\varepsilon_1$ is a diagonal matrix with distinct entries or $\varepsilon_1$ contains nonzero off-diagonal entries. This paper focuses on the behavior of the system with anisotropic media.

We present the kernel compensation formulation in our prior work \cite{jin2025kcpc} to handle the huge null space brought by $\ker(\nabla_{\wnk}\times)$ as well as the divergence-free condition $\nabla_{\wnk}\cdot\vec H=\vec 0$, which yields an HPD system with an efficient preconditioner. Our present focus is a novel discretization for $\varepsilon^{-1}$ in \eqref{display:intro_permittivity} where nonzero entries of $\varepsilon_1$ occur. Unlike finite element methods \cite{chou2019fepc, dobson1999pc2d, lu2017dg} or plane wave expansion methods \cite{ho1990pwe,sailor1998pwe} which yield 
generalized eigenvalue problems, the finite difference framework, such as \cite{huang2013eigendecomppc,huang2015null}, produces simpler eigenvalue problems and benefits from circulant matrix structures. Additionally, by applying a coordinate transform to the boundary conditions in \eqref{display:intro_magneticshifted}, we align the periodicity with the standard $\Rbb^3$ basis, enabling highly efficient GPU acceleration.

In this paper, we overcome the limitations of 
solving the anisotropic system with finite difference method 
and present a corresponding robust, efficient eigensolver built upon the kernel compensation technique. We summarize the key innovations 
as follows:
\begin{itemize}
\item We complete the theoretical foundation of the kernel compensation technique by rigorously demonstrating the exact structure of the null space and a detailed characterization of the penalty coefficient. 

\item By using 3D discrete Fourier transform (DFT), we design matrix-free discrete differential operators, reducing the per-iteration cost of the eigensolver to a single 3D global DFT. Furthermore, we extend the Yee's scheme to high-order discretizations under the framework of matrix-free operations, which improves the accuracy under certain cases.
\item We propose an innovative discretization concerning permittivity $\varepsilon$ in anisotropic media, proving that the corresponding discrete matrix is HPD under physical constraints. This HPD property is essential for ensuring the correctness of the kernel compensation technique.
\item We present extensive numerical computations on various anisotropic systems and implement GPU acceleration on the whole process for high performance of large-scale simulations.
\end{itemize}

The paper is outlined as follows. Section \ref{se:mfd} formally introduces the finite difference discretization process and the kernel compensation formulation. Section \ref{se:permittivity} details our innovative treatment of the anisotropic permittivity tensor and establishes the theoretical proof of the HPD property. Section \ref{se:num} presents extensive numerical experiments, including band structure analysis and computational performance benchmarks. We conclude in Section \ref{se:con}.

\section{Discretization and Kernel Compensation}	\label{se:mfd}
In Subsection \ref{sse:yee}, we present Yee's scheme \cite{yee1966} to discretize Maxwell's equation with shifted nabla operator. Matrix representations of discrete differential operators are also provided. The results are generalized to arbitrary lattices $\left\{\vec a_i\right\}_{i=1}^3$ through coordinate transform, which is covered in Subsection \ref{sse:ct}. We then discuss the kernel compensation technique that eliminates the null space in Subsection \ref{sse:kc}. In Subsection \ref{sse:matfree}, we imply matrix-free operations to replace sparse matrix-vector multiplication and design an efficient preconditioner. In Subsection \ref{sse:kc:highord}, we introduce high-order finite difference stencils and extend previous results to high-order discretizations.

\subsection{Yee's Scheme}\label{sse:yee}	
In this subsection, we set the primitive lattice vectors to be the standard orthonormal basis of $\Rbb^3$, i.e., $\vec a_i = \vec e_i$ and $\Omega = (0,1)^3$. We uniformly partition the domain $\Omega$ into a cubic grid with division $N\in\Nbb^*$, grid size $h=1/N$ in each $x, y, z$ direction. Although the generalization to distinct divisions along each axis is straightforward, it does not yield improvements. The distributions of node, edge, face and volume DoFs are shown in Figure \ref{fig:DoFs}.
\begin{figure}
\centering
\includegraphics[width=\linewidth]{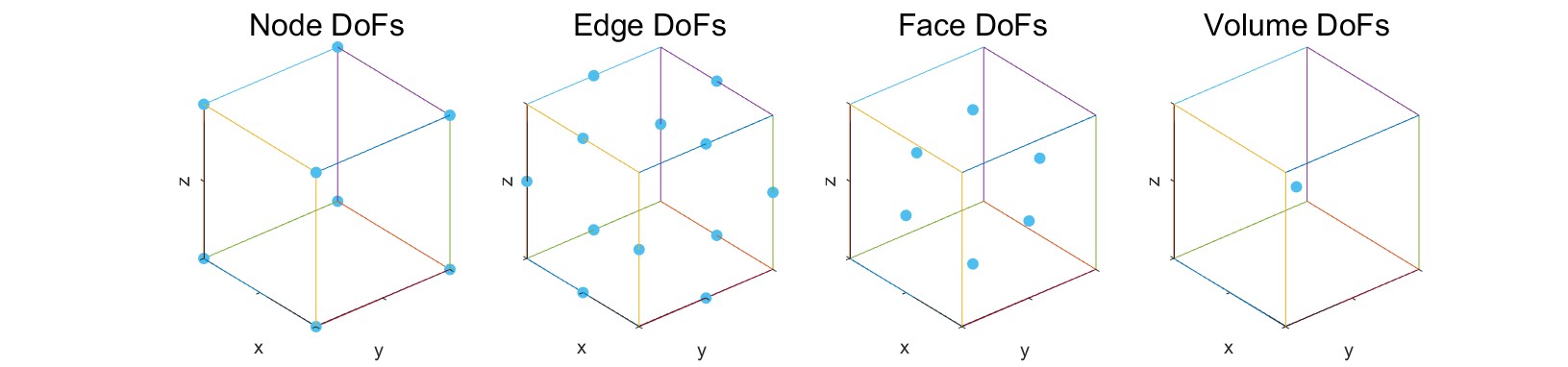}  
\caption{DoFs of scalar and vector grid functions on a single cell}
\label{fig:DoFs}
\end{figure}

In Yee's staggered grid \cite{yee1966} the magnetic field $\vec H=(H_1,H_2,H_3)$ is approximated by face DoFs, while electric field $\vec E=(E_1,E_2,E_3)$ by edge DoFs:
\begin{align}\label{display:yee_edgefacedofs}
\begin{aligned}
&\vec E\mapsto \vec E_h:=\begin{pmatrix}
\vec E^1_h, & \vec E^2_h, & \vec E^3_h
\end{pmatrix}^\top :=\begin{pmatrix}
E^1_{i-\frac{1}{2},j,k},&E^2_{i,j-\frac{1}{2},k},&
E^3_{i,j,k-\frac{1}{2}}
\end{pmatrix}^\top,\\
&\vec H\mapsto \vec H_h:=\begin{pmatrix}
\vec H^1_h,&\vec H^2_h,&\vec H^3_h
\end{pmatrix}^\top :=\begin{pmatrix}
H^1_{i,j-\frac{1}{2},k-\frac{1}{2}},&H^2_{i-\frac{1}{2},j,k-\frac{1}{2}},&
H^3_{i-\frac{1}{2},j-\frac{1}{2},k}
\end{pmatrix}^\top,
\end{aligned}
\end{align}
where $1\leq i,j,k\leq N$. Subscripts represent discrete coordinates on the Cartesian grid where the function is evaluated. The image of the divergence operator $\nabla_{\wnk}\cdot$ acting on an arbitrary periodic scalar function $\psi$, for example, is approximated by volume DoFs:
\begin{align}\label{display:yee_volumedofs}
\psi\mapsto\vec\psi_h:=\begin{pmatrix}
\psi_{i-\frac{1}{2},j-\frac{1}{2},k-\frac{1}{2}}
\end{pmatrix}.
\end{align}
Therefore we introduce discrete spaces of edge, face and volume DoFs as
\begin{align}\label{display:yee_dofspace}
\begin{aligned}
&\Ecal_h:=\left\{\begin{pmatrix}
E^1_{i-\frac{1}{2},j,k}, & E^2_{i,j-\frac{1}{2},k}, &
E^3_{i,j,k-\frac{1}{2}}
\end{pmatrix}: 
\vec E\text{ is a periodic field on }\Omega\right\}\simeq\Cbb^{3N^3},\\
&\Fcal_h:=\left\{\begin{pmatrix}
H^1_{i,j-\frac{1}{2},k-\frac{1}{2}}, & H^2_{i-\frac{1}{2},j,k-\frac{1}{2}},&
H^3_{i-\frac{1}{2},j-\frac{1}{2},k}
\end{pmatrix}:\vec H\text{ is a periodic field on }\Omega\right\}\simeq\Cbb^{3N^3},\\
&\Vcal_h:=\left\{\begin{pmatrix}
\psi_{i-\frac{1}{2},j-\frac{1}{2},k-\frac{1}{2}}
\end{pmatrix}:\psi\text{ is a periodic scalar function on }\Omega\right\}\simeq \Cbb^{N^3}.
\end{aligned}
\end{align}

In the double curl problem \eqref{display:intro_magneticshifted}, 
the discrete outer curl operator maps edge DoFs to face DoFs. 
Crucially, the inner curl operator is derived as the adjoint of 
the outer operator, mapping face DoFs to edge DoFs. We denote the matrix representations for the outer curl operator $\nabla_{\wnk}\times$, 
the permittivity $\varepsilon$ and the divergence operator $\nabla_{\wnk}\cdot$ as $\Acal$, $\Mcal_\varepsilon$ and $\Bcal$, respectively. Then due to the adjoint relation \cite{jin2025kcpc,lipnikov2014mfd}, matrix representation of the inner curl operator $\nabla_{\wnk}\times$ is simply $\Acal^\conjT$, where ``$\conjT$'' 
denotes the conjugate transpose. We have
\begin{align}\label{display:yee_dofmap}
\Acal:\Ecal_h\to\Fcal_h,\quad\Mcal_\varepsilon:\Ecal_h\to\Ecal_h,\quad \Acal^\dagger:\Fcal_h\to\Ecal_h,\quad\Bcal:\Fcal_h\to\Vcal_h.
\end{align}

For any  $\vec E_h\in\Ecal_h$, the image $\Acal\vec E_h$ along each axis has the following expressions:
\begin{eqnarray*}\label{display:yee_imagecurl}
(\Acal\vec E_h)_{i,j-\frac{1}{2},k-\frac{1}{2}}&=&\frac{E^3_{i,j,k-\frac{1}{2}}-E^3_{i,j-1,k-\frac{1}{2}}}{h}-\frac{E^2_{i,j-\frac{1}{2},k}-E^2_{i,j-\frac{1}{2},k-1}}{h}\\
&&+\imagunit\left(\wnkk_2\frac{E^3_{i,j-1,k-\frac{1}{2}}+E^3_{i,j,k-\frac{1}{2}}}{2}-\wnkk_3\frac{E^2_{i,j-\frac{1}{2},k-1}+E^2_{i,j-\frac{1}{2},k}}{2}\right),\\
(\Acal\vec E_h)_{i-\frac{1}{2},j,k-\frac{1}{2}}&=&\frac{E^1_{i-\frac{1}{2},j,k}-E^1_{i-\frac{1}{2},j,k-1}}{h}-\frac{E^3_{i,j,k-\frac{1}{2}}-E^3_{i-1,j,k-\frac{1}{2}}}{h}\\
&&+\imagunit\left(\wnkk_3\frac{E^1_{i-\frac{1}{2},j,k-1}+E^1_{i-\frac{1}{2},j,k}}{2}-\wnkk_1\frac{E^3_{i-1,j,k-\frac{1}{2}}-E^3_{i,j,k-\frac{1}{2}}}{2}\right),\\
(\Acal\vec E_h)_{i-\frac{1}{2},j-\frac{1}{2},k}&=&\frac{E^2_{i,j-\frac{1}{2},k}-E^2_{i-1,j-\frac{1}{2},k}}{h}-\frac{E^1_{i-\frac{1}{2},j,k}-E^1_{i-\frac{1}{2},j-1,k}}{h}\\
&&+\imagunit\left(\wnkk_1\frac{E^2_{i-1,j-\frac{1}{2},k}+E^2_{i,j-\frac{1}{2},k}}{2}-\wnkk_2\frac{E^1_{i-\frac{1}{2},j-1,k}+E^1_{i-\frac{1}{2},j,k}}{2}\right).
\end{eqnarray*}
Likewise, for any $\vec H_h\in\Fcal_h$, the discrete divergence is formulated by
\begin{eqnarray*}\label{display:yee_imagediv}
(\Bcal\vec H_h)_{i-\frac{1}{2},j-\frac{1}{2},k-\frac{1}{2}}
&=&\frac{H^1_{i,j-\frac{1}{2},k-\frac{1}{2}}
-H^1_{i-1,j-\frac{1}{2},k-\frac{1}{2}}}{h}
+\frac{H^2_{i-\frac{1}{2},j,k-\frac{1}{2}}-H^2_{i-\frac{1}{2},j-1,k-\frac{1}{2}}}{h}\\
&&+\frac{H^3_{i-\frac{1}{2},j-\frac{1}{2},k}-H^3_{i-\frac{1}{2},j-\frac{1}{2},k-1}}{h}+\imagunit\left(\wnkk_1\frac{H^1_{i-1,j-\frac{1}{2},k-\frac{1}{2}}
+H^1_{i,j-\frac{1}{2},k-\frac{1}{2}}}{2}\right.\\
&&\left.+\wnkk_2\frac{H^2_{i-\frac{1}{2},j-1,k-\frac{1}{2}}
+H^2_{i-\frac{1}{2},j,k-\frac{1}{2}}}{2}
+\wnkk_3\frac{H^3_{i-\frac{1}{2},j-\frac{1}{2},k-1}
+H^3_{i-\frac{1}{2},j-\frac{1}{2},k}}{2}\right).
\end{eqnarray*}
The notations of $\vec E_h$ and $\vec H_h$ are adopted from \eqref{display:yee_dofspace}. 
We may formulate the explicit matrix representations through circulant matrix below:
\begin{align}\label{display:yee:D0D1}
\Dbf_1:=\frac{1}{h}\begin{pmatrix}
1 & \ & \ & -1 \\ -1 & 1 & \ & \ \\
\ & \ & \ddots & \ \\
\ & \ & -1  & 1
\end{pmatrix}\in\Rbb^{N\times N},\quad 
\Dbf_0:=\frac{1}{2}\begin{pmatrix}
1 & \ & \ & 1 \\ 1 & 1 & \ & \ \\
\ & \ & \ddots & \ \\
\ & \ & 1  & 1
\end{pmatrix}\in\Rbb^{N\times N}.
\end{align}
Then directional derivatives combined with actions of $\wnk$ can be represented by the following matrix blocks using Kronecker product $\otimes$ ($I_N$ denotes the identity matrix of dimension $N$):
\begin{align}\label{display:yee_block}
\begin{aligned}
&\Dcal_i=\Dcal_{1,i}+\imagunit\wnkk_i\Dcal_{0,i}\quad i=1,2,3,\text{ where}\\
&\Dcal_{s,1}=I_{N^2}\otimes\Dbf_s,\quad \Dcal_{s,2}=I_N\otimes\Dbf_s\otimes I_N,\quad \Dcal_{s,3}=\Dbf_s\otimes I_{N^2},\quad s=1,2.
\end{aligned}
\end{align}
Therefore the matrix representations of $\Acal$ and $\Bcal$ are given by
\begin{align}\label{display:yee_ABmatrix}
\Acal=\begin{pmatrix}
\ & -\Dcal_3 & \Dcal_2\\ \Dcal_3 & \ & -\Dcal_1\\ -\Dcal_2 & \Dcal_1 &\
\end{pmatrix},\quad \Bcal=\begin{pmatrix}
\Dcal_1 & \Dcal_2 & \Dcal_3
\end{pmatrix}.		
\end{align}
The assembling of the discrete permittivity matrix $\Mcal_\varepsilon$ will be covered 
in Section \ref{se:permittivity}. In the rest of this section 
we assume that $\Mcal_{\varepsilon}$ is HPD.

\subsection{Coordinate Transform}\label{sse:ct}
In Subsection \ref{sse:yee}, the periodic cell $\Omega$ is assumed to be a unit cube. 
For general cases, the formulations \eqref{display:yee_block} 
and \eqref{display:yee_ABmatrix} can be generalized for any 
linearly independent lattice vectors $\{\vec a_i\}_{i=1}^3$ through a simple coordinate transform. If we apply the following operation:
\begin{align}\label{display:ct_replacement}
\text{Replace } \vec H\text{ in \eqref{display:intro_magneticshifted} by }
\left(\vec x\mapsto\vec H(\Abf\vec x)\right),\quad\Abf=(\vec a_1,\vec a_2,\vec a_3),
\end{align}
then equation \eqref{display:intro_magneticshifted} can be equivalently written as
\begin{align}\label{display:ct_magneticCT}
\left\{
\begin{aligned}
&\nabla_{\Abf,\wnk}\times\varepsilon^{-1}\nabla_{\Abf,\wnk}\times\vec H
=\omega^2\vec H,\quad \text{in }(0,1)^3,\\
&\nabla_{\Abf,\wnk}\cdot\vec H=0,\quad \text{in } (0, 1)^3,\\
& \vec H(\vec x+\vec e_i)=H (\vec x), \quad i=1,2,3,
\end{aligned}
\right.
\end{align}
where $\nabla_{\Abf,\wnk}=\nabla_{\Abf}+\imagunit\wnk I$ and $\nabla_{\Abf}$ is given by
\begin{align}\label{display:ct_newnabla}
\nabla_{\Abf}=\left(\sum\limits_{j=1}^3b_{j1}\frac{\p}{\p x_j}\ ,\ \sum\limits_{j=1}^3b_{j2}\frac{\p}{\p x_j}\ ,\ \sum\limits_{j=1}^3b_{j3}\frac{\p}{\p x_j}\right)^\top ,\quad b_{ij}=(\Abf^{-1})_{ij}.
\end{align}
In this case we reformulate the matrix blocks in \eqref{display:yee_block} as
\begin{align}\label{display:ct_blocks}
\begin{aligned}
\hat\Dcal_i=\sum\limits_{j=1}^3b_{ji}\Dcal_{1,i}+\imagunit\wnkk_i\Dcal_{0,i},\quad i=1,2,3.
\end{aligned}
\end{align}
The matrices $\Acal$ and $\Bcal$ preserve the structure in \eqref{display:yee_ABmatrix} by updating matrix blocks $\hat\Dcal_i$. In summary, the discrete formulation of model problem \eqref{display:intro_magneticshifted} as well as \eqref{display:ct_magneticCT} results in the system:
\begin{align}\label{display:ct_preliminaryresult}
\left\{
\begin{aligned}
&\left(\Acal\Mcal_\varepsilon\Acal^\conjT\right)\vec H_h=\omega_h^2\vec H_h,\\
&\Bcal\vec H_h=\vec 0.
\end{aligned}
\right.
\end{align}

\subsection{Kernel Compensation}\label{sse:kc}
Motivated by the mixed formulation of Maxwell's equations \cite{kikuchi1989mixedform}, kernel compensation is introduced to find the nontrivial eigenpairs that satisfy the discrete divergence-free condition $\Bcal\vec H_h=0$ while avoiding the null space of $\Acal^\dagger$, which is formulated as:
\begin{align}\label{display:kc_formulation}
\begin{aligned}
&\textbf{The first several smallest positive eigenvalues of}\\
&(\Acal\Mcal_\varepsilon\Acal^\conjT+\gamma\Bcal^\conjT\Bcal)\vec H_h=\omega_h^2\vec H_h\\
&\textbf{coincide with \eqref{display:ct_preliminaryresult} when }\gamma>0\textbf{ is sufficiently large.}
\end{aligned}
\end{align}
Now, we come to prove the correctness of \eqref{display:kc_formulation} with an explicit choice of the penalty coefficient $\gamma$. 
For this aim, we first introduce the following lemma. 
\begin{lem}\label{lem:kc:diagofcirc}
If $\Cbf\in \Cbb^{N\times N}$ is a circulant matrix with the first row $(c_1,\cdots, c_N)$, 
then it can be diagonalized by the unitary DFT matrix $\Fbf$ as
\begin{align}\label{display:kc_diagofcirc}
\begin{aligned}
&\Cbf=\Fbf\diag(\lambda_1,\cdots,\lambda_N)\Fbf^\conjT,
\quad \lambda_i=\sum\limits_{j=1}^{N}c_j\omega^{(i-1)(j-1)},\quad i=1,\cdots,N.
\end{aligned}
\end{align}
The matrix $\Fbf=(F_{ij})_{1\leq i,j\leq N}$ is defined by $ F_{ij}=\frac{1}{\sqrt{N}}\omega^{(i-1)(j-1)}$ and $\omega=\exp\left(\imagunit\frac{2\pi}{N}\right)$. As a corollary, the product of any two circulant matrices is commutative: $\Cbf_1\Cbf_2=\Cbf_2\Cbf_1$ whenever $\Cbf_1$ and $\Cbf_2$ are circulant matrices.
\end{lem}
\begin{proof}
The proof is provided in \cite{chen1987solcirc}.
\end{proof}
\begin{prop}\label{prop:kc_BAzero}
The matrices $\Acal$ and $\Bcal$ defined in \eqref{display:yee_block}, \eqref{display:yee_ABmatrix} 
and \eqref{display:ct_blocks} satisfy: $\Bcal\Acal=\vec 0$.
\end{prop}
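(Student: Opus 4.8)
The plan is to reduce the identity $\Bcal\Acal = \vec 0$ to the pairwise commutativity of the three directional blocks $\Dcal_1,\Dcal_2,\Dcal_3$ (resp. $\hat\Dcal_1,\hat\Dcal_2,\hat\Dcal_3$ in the coordinate-transformed setting), and then to establish that commutativity from the structure of the elementary blocks. First I would carry out the block product using the explicit forms in (\ref{display:yee_ABmatrix}): writing $\Bcal = (\Dcal_1\ \ \Dcal_2\ \ \Dcal_3)$ and reading off the three columns of $\Acal$, the blocks of the $1\times 3$ row $\Bcal\Acal$ come out to be exactly the commutators $\Dcal_2\Dcal_3-\Dcal_3\Dcal_2$, $\Dcal_3\Dcal_1-\Dcal_1\Dcal_3$, and $\Dcal_1\Dcal_2-\Dcal_2\Dcal_1$. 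Hence it suffices to show $\Dcal_i\Dcal_j=\Dcal_j\Dcal_i$ for all $i,j\in\{1,2,3\}$.

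Next, since by (\ref{display:yee_block}) each $\Dcal_i = \Dcal_{1,i}+\imagunit\wnkk_i\Dcal_{0,i}$ is a linear combination of the elementary blocks $\Dcal_{s,i}$, $s\in\{1,2\}$ (and likewise each $\hat\Dcal_i$ in (\ref{display:ct_blocks}) is a linear combination of $\Dcal_{1,1},\Dcal_{1,2},\Dcal_{1,3}$ and $\Dcal_{0,i}$), it is enough to prove that all matrices in the family $\{\Dcal_{s,i}\}$ mutually commute. I would rewrite each elementary block as a threefold Kronecker product, $\Dcal_{s,1}=I_N\otimes I_N\otimes\Dbf_s$, $\Dcal_{s,2}=I_N\otimes\Dbf_s\otimes I_N$, $\Dcal_{s,3}=\Dbf_s\otimes I_N\otimes I_N$, so $\Dbf_s$ sits in the $i$-th tensor slot and identities in the others. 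Applying the mixed-product rule $(A_1\otimes A_2\otimes A_3)(B_1\otimes B_2\otimes B_3)=(A_1B_1)\otimes(A_2B_2)\otimes(A_3B_3)$: when $i\neq j$ the two blocks act on disjoint slots and commute trivially, while when $i=j$ the relevant slot carries $\Dbf_s\Dbf_t$ versus $\Dbf_t\Dbf_s$, which agree because $\Dbf_0$ and $\Dbf_1$ are circulant and circulant matrices commute by Lemma \ref{lem:kc_diagofcirc}.

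Combining the two steps, the elementary blocks all commute, hence so do their linear combinations $\Dcal_i$ (or $\hat\Dcal_i$), hence every block of $\Bcal\Acal$ vanishes, i.e. $\Bcal\Acal=\vec 0$. I do not expect a genuine obstacle here; the only thing requiring care is the bookkeeping — correctly matching each column of $\Acal$ to its commutator, and tracking the Kronecker slots so that the same-slot and different-slot cases are handled uniformly in both the cubic case (\ref{display:yee_block}) and the coordinate-transformed case (\ref{display:ct_blocks}).
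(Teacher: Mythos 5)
Your proposal is correct and follows essentially the same route as the paper: both reduce $\Bcal\Acal=\vec 0$ to pairwise commutativity of the $\Dcal_i$ (or $\hat\Dcal_i$), reduce that further to commutativity of the elementary blocks $\Dcal_{s,i}$, and then invoke the Kronecker mixed-product rule for the off-slot case and circulant commutativity (Lemma~\ref{lem:kc_diagofcirc}) for the same-slot case. You spell out the block bookkeeping and the threefold tensor slots more explicitly, but the argument is the same.
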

\begin{proof}
It suffices to verify that $\hat\Dcal_i\hat\Dcal_j=\hat\Dcal_j\hat\Dcal_i$, which is equivalent 
to the commutativity of the product of $\Dcal_{s,i}$ and 
$\Dcal_{t,j}$ for any $s,t\in\{0,1\}$ and $i,j\in\{1,2,3\}$. When $i\neq j$ 
the result is proved by the mixed product property \begin{align}\label{display:kc_mixedproductprop}
(A_1\otimes B_1)(A_2\otimes B_2)=(A_1A_2)\otimes(B_1B_2)
\end{align}for any $A_i$, $B_i$ with compatible shapes. Thus multiplications of $\Dcal_{s,i}$ 
and $\Dcal_{t,i}$ are commutative. When $i=j$ the result holds due to Lemma \ref{lem:kc:diagofcirc} 
and the mixed product property \eqref{display:kc_mixedproductprop}.
\end{proof}

\begin{prop}[Main Result of Kernel Compensation]\label{prop:kc_mainprop}
Let $\Acal,\Mcal\in\Cbb^{N^\Acal\times N^\Acal}$ and $\Bcal\in \Cbb^{N^\Bcal\times N^\Acal}$be matrices such that: $\Bcal\Acal=\vec 0$, and $\Mcal$ is HPD. We assume that 
\begin{align}\label{display:kc_eigendistributionofAB}
\begin{aligned}
&\textbf{eigenvalues of }\Acal \Mcal\Acal^\conjT:\quad\overbrace{0,0,\cdots,0}^{N^\Acal_{0}\text{ zeros}}<\overbrace{\lambda^\Acal_{1}\leq\lambda^\Acal_{2}\leq\cdots\leq\lambda^\Acal_{N^\Acal_{1}}}^{N_{1}^\Acal\text{ nonzeros}},\\
&\textbf{eigenvalues of }\Bcal^\conjT\Bcal:\quad\overbrace{0,0,\cdots,0}^{N^\Bcal_{0}\text{ zeros}}<\overbrace{\lambda^\Bcal_{1}\leq\lambda^\Bcal_{2}\leq\cdots\leq\lambda_{N^\Bcal_{1}}^\Bcal}^{N^\Bcal_{1}\text{ nonzeros}}.
\end{aligned}
\end{align}
Then the eigenvalue distributions of $\Acal\Mcal\Acal^\conjT+\gamma\Bcal^\conjT\Bcal$ are
\begin{align}
\overbrace{0,\cdots,0}^{\dim\Hcal\text{ zeros}}<\lambda_1\leq\lambda_2\leq\cdots\leq\lambda_m\leq\cdots,
\end{align}
where $\gamma$ is an arbitrary positive number, $\Hcal:=\ker\Acal^\conjT\cap\ker\Bcal$. There holds
\begin{align}
\left\{\lambda_1,\cdots,\lambda_m,\cdots\right\}=\left\{\lambda_1^\Acal,\cdots,\lambda_{N_1^\Acal}^\Acal\right\}
\cup\left\{\gamma\lambda_1^\Bcal,\cdots,\gamma\lambda_{N_1^\Bcal}^\Bcal\right\}.
\end{align}
\end{prop}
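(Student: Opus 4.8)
The plan is to use the hypothesis $\Bcal\Acal=\vec 0$ together with the HPD-ness of $\Mcal$ to split $\Cbb^{N^\Acal}$ into three mutually orthogonal, $T$-invariant subspaces, where $T:=\Acal\Mcal\Acal^\conjT+\gamma\Bcal^\conjT\Bcal$, and then to read the spectrum of $T$ off block by block. The structural consequences of $\Bcal\Acal=\vec 0$ are: $\image\Acal\subseteq\ker\Bcal$ and, taking conjugate transposes in $\Acal^\conjT\Bcal^\conjT=(\Bcal\Acal)^\conjT=\vec 0$, $\image\Bcal^\conjT\subseteq\ker\Acal^\conjT$; moreover $\image\Acal\perp\image\Bcal^\conjT$, since for $u=\Acal x$ and $v=\Bcal^\conjT y$ one has $v^\conjT u=y^\conjT\Bcal\Acal x=0$. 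Because $\ker\Acal^\conjT=(\image\Acal)^\perp$ and $\ker\Bcal=(\image\Bcal^\conjT)^\perp$ in finite dimension, and since $\Hcal=\ker\Acal^\conjT\cap\ker\Bcal$, this yields the orthogonal direct sum
\begin{align*}
\Cbb^{N^\Acal}=\image\Acal\ \oplus\ \image\Bcal^\conjT\ \oplus\ \Hcal .
\end{align*}

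Next I would show that $\Acal\Mcal\Acal^\conjT$ and $\Bcal^\conjT\Bcal$ are each ``carried by'' one of the first two summands. Since $\Mcal$ is HPD, $x^\conjT(\Acal\Mcal\Acal^\conjT)x=(\Acal^\conjT x)^\conjT\Mcal(\Acal^\conjT x)$ vanishes iff $\Acal^\conjT x=0$, so $\ker(\Acal\Mcal\Acal^\conjT)=\ker\Acal^\conjT$ and, by Hermiticity, $\image(\Acal\Mcal\Acal^\conjT)=(\ker\Acal^\conjT)^\perp=\image\Acal$; in particular $\dim\image\Acal=N^\Acal_1$ and $\image\Acal$ is the span of the eigenvectors of $\Acal\Mcal\Acal^\conjT$ with positive eigenvalue. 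Symmetrically $\ker(\Bcal^\conjT\Bcal)=\ker\Bcal$, $\image(\Bcal^\conjT\Bcal)=\image\Bcal^\conjT$, $\dim\image\Bcal^\conjT=N^\Bcal_1$, and $\image\Bcal^\conjT$ is the positive-eigenspace of $\Bcal^\conjT\Bcal$. Hence $\Acal\Mcal\Acal^\conjT$ maps $\image\Acal$ into itself and annihilates both $\image\Bcal^\conjT$ (which lies in $\ker\Acal^\conjT$) and $\Hcal$, while $\Bcal^\conjT\Bcal$ maps $\image\Bcal^\conjT$ into itself and annihilates $\image\Acal$ (which lies in $\ker\Bcal$) and $\Hcal$. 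Consequently $T$ preserves each of the three summands and equals $\Acal\Mcal\Acal^\conjT|_{\image\Acal}$ on the first, $\gamma\,\Bcal^\conjT\Bcal|_{\image\Bcal^\conjT}$ on the second, and $0$ on $\Hcal$; i.e. $T$ is block-diagonal with respect to an orthonormal basis adapted to the decomposition.

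Finally I would identify the eigenvalues of the two nontrivial blocks and count dimensions. Restricting $\Acal\Mcal\Acal^\conjT$ to its positive-eigenspace $\image\Acal$ leaves exactly the eigenvalues $\lambda^\Acal_1,\dots,\lambda^\Acal_{N^\Acal_1}$ (with multiplicity), and likewise the spectrum of $\gamma\,\Bcal^\conjT\Bcal|_{\image\Bcal^\conjT}$ is $\gamma\lambda^\Bcal_1,\dots,\gamma\lambda^\Bcal_{N^\Bcal_1}$. Since $T$ is the orthogonal direct sum of these two blocks with the zero operator on $\Hcal$, its eigenvalue multiset is the disjoint union of $\{\lambda^\Acal_i\}_{i}$, $\{\gamma\lambda^\Bcal_j\}_{j}$, and $\dim\Hcal$ copies of $0$; as $\gamma>0$ makes every $\gamma\lambda^\Bcal_j$ strictly positive, the eigenvalue $0$ of $T$ has multiplicity exactly $\dim\Hcal$ and the positive part of the spectrum is precisely $\left\{\lambda_1^\Acal,\cdots,\lambda_{N_1^\Acal}^\Acal\right\}\cup\left\{\gamma\lambda_1^\Bcal,\cdots,\gamma\lambda_{N_1^\Bcal}^\Bcal\right\}$, as asserted.

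The point that needs the most care is the set of ``transversality'' facts that make the block decomposition genuine rather than merely upper-triangular: $\image\Acal\perp\image\Bcal^\conjT$ (which uses only $\Bcal\Acal=\vec 0$), the identifications $\ker(\Acal\Mcal\Acal^\conjT)=\ker\Acal^\conjT$ and $\ker(\Bcal^\conjT\Bcal)=\ker\Bcal$ (which is where HPD-ness of $\Mcal$, not merely Hermiticity, is essential), and the resulting equality of $\image\Acal$ with the positive-eigenspace of $\Acal\Mcal\Acal^\conjT$. These are exactly what guarantees that no positive eigenvalue of $T$ is spuriously absorbed into $\ker T$ and that $\ker T$ does not pick up extra directions; the remainder is routine bookkeeping with orthogonal complements in finite dimension.
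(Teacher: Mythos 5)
Your proof is correct, and it takes a genuinely different route from the one the authors intended. The paper's proof (deferred to \cite{jin2025kcpc}, and appearing in commented-out form in the source) normalizes $\Mcal$ to the identity via a Cholesky factor, invokes the Courant--Fischer min-max characterization $\lambda_m=\min_{\chi\subset\Hcal^\perp}\max_{\vec x\in S_\chi}\bigl(\|\Acal^\conjT\vec x\|^2+\gamma\|\Bcal\vec x\|^2\bigr)$, decomposes each trial subspace $\chi$ into pieces living in $\image\Acal$ and $\image\Bcal^\conjT$, and then closes an induction on $m$. You instead observe that $\Bcal\Acal=\vec 0$ already forces the three-fold orthogonal decomposition $\Cbb^{N^\Acal}=\image\Acal\oplus\image\Bcal^\conjT\oplus\Hcal$, that HPD-ness of $\Mcal$ upgrades the kernel/image relations to $\ker(\Acal\Mcal\Acal^\conjT)=\ker\Acal^\conjT$ and $\image(\Acal\Mcal\Acal^\conjT)=\image\Acal$ (likewise for $\Bcal^\conjT\Bcal$), and hence that $T=\Acal\Mcal\Acal^\conjT+\gamma\Bcal^\conjT\Bcal$ is block-diagonal with respect to this decomposition, with blocks $\Acal\Mcal\Acal^\conjT|_{\image\Acal}$, $\gamma\Bcal^\conjT\Bcal|_{\image\Bcal^\conjT}$, and $0|_\Hcal$. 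This is more structural and, in my view, cleaner: the eigenvalue multiset and the multiplicity of $0$ fall out directly without any variational machinery, without the $\Mcal^{1/2}$ normalization, and without the induction on $m$. The min-max route has the usual advantage of being more amenable to perturbation-type estimates, and it sidesteps the need to establish explicitly that $\Hcal$ is a complement of $\image\Acal\oplus\image\Bcal^\conjT$; but your decomposition argument gives a transparent ``why'' that the paper's proof obscures. All the transversality checks you flag as delicate are stated and justified correctly.
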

\begin{proof}
We refer the reader to \cite{jin2025kcpc} for the proof.
\end{proof}

The kernel compensation formulation \eqref{display:kc_formulation} allows us to recover the nonzero eigenvalues of $\Acal\Mcal_\varepsilon\Acal^\conjT$ by computing the spectrum of the penalized system $\Acal\Mcal_\varepsilon\Acal^\conjT+\gamma\Bcal^\conjT\Bcal$. Proposition \ref{prop:kc_mainprop} 
shows the first $m$ smallest positive eigenvalues of the two systems are identical, provided that $\gamma$ satisfies $\gamma\lambda_1^\Bcal\geq\lambda_m^\Acal$. To characterize the resulting null space $\Hcal$ 
and determine an appropriate value for the penalty coefficient $\gamma$, we present the following analysis.

\begin{lem}\label{lem:kc_ABblock}
When the coordinate change \eqref{display:ct_replacement} $\Abf=I_3$, we have

(1). The block matrices $\Acal$ and $\Bcal$ satisfy 
$\Acal\Acal^\conjT+\Bcal^\conjT\Bcal=\diag(\Lcal,\Lcal,\Lcal)$ with 
\begin{align}\label{display:kc_Lmat}
\Lcal=\Bcal\Bcal^\conjT=\Dcal_1^\conjT\Dcal_1+\Dcal_2^\conjT\Dcal_2+\Dcal_3^\conjT\Dcal_3\in\Cbb^{N^3\times N^3},
\end{align}

(2). If there holds conditions
\begin{align}\label{display:kc_condition_kneq0}
\wnk\neq\vec 0,\quad\max\limits_{i=1,2,3}|\wnkk_i|\leq\pi,\quad \text{grid size }h=\frac{1}{N}<\frac{1}{\pi},
\end{align}
then the smallest nonzero eigenvalue of $\Bcal^\conjT\Bcal$, as is denoted by $\lambda_1^\Bcal$ in Proposition \ref{prop:kc_mainprop}, satisfies $\lambda_1^\Bcal=\lambda_{\min}(\Lcal)=\normtwo{\wnk}^2$.
\end{lem}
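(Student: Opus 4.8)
The plan is to diagonalize the relevant operators simultaneously by the three-dimensional discrete Fourier transform $\Fbf\otimes\Fbf\otimes\Fbf$ and then reduce both assertions to one-dimensional facts about circulant symbols. The starting observation is that, since $\Abf=I_3$, each block $\Dcal_{s,i}$ in (\ref{display:yee_block}) is a Kronecker product of the circulant matrix $\Dbf_s$ with identities; hence by Lemma \ref{lem:kc_diagofcirc} together with the mixed product property (\ref{display:kc_mixedproductprop}), all of $\Dcal_{1,1},\Dcal_{0,1},\dots,\Dcal_{1,3},\Dcal_{0,3}$ are simultaneously unitarily diagonalized by $\Fbf\otimes\Fbf\otimes\Fbf$. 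In particular every $\Dcal_i=\Dcal_{1,i}+\imagunit\wnkk_i\Dcal_{0,i}$ is normal, so $\Dcal_i\Dcal_i^\conjT=\Dcal_i^\conjT\Dcal_i$, and any two of $\Dcal_1,\Dcal_1^\conjT,\dots,\Dcal_3,\Dcal_3^\conjT$ commute.

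For part (1) I would expand $\Acal\Acal^\conjT$ and $\Bcal^\conjT\Bcal$ block by block from (\ref{display:yee_ABmatrix}). The $(i,i)$ block of $\Acal\Acal^\conjT+\Bcal^\conjT\Bcal$ is $\sum_{k\neq i}\Dcal_k\Dcal_k^\conjT+\Dcal_i^\conjT\Dcal_i$, which collapses by normality to $\Lcal=\Dcal_1^\conjT\Dcal_1+\Dcal_2^\conjT\Dcal_2+\Dcal_3^\conjT\Dcal_3$; each off-diagonal $(i,j)$ block equals $\Dcal_i^\conjT\Dcal_j-\Dcal_j\Dcal_i^\conjT$ up to sign and therefore vanishes by commutativity. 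This gives $\Acal\Acal^\conjT+\Bcal^\conjT\Bcal=\diag(\Lcal,\Lcal,\Lcal)$, and the same normality identity yields $\Bcal\Bcal^\conjT=\Dcal_1\Dcal_1^\conjT+\Dcal_2\Dcal_2^\conjT+\Dcal_3\Dcal_3^\conjT=\Lcal$.

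For part (2), since $\Bcal^\conjT\Bcal$ and $\Bcal\Bcal^\conjT=\Lcal$ share the same nonzero eigenvalues, it suffices to diagonalize $\Lcal$ and prove $\lambda_{\min}(\Lcal)=\normtwo{\wnk}^2$; because $\wnk\neq\vec0$ this quantity is strictly positive, so $\Lcal$ is HPD and its smallest eigenvalue is exactly the smallest \emph{nonzero} eigenvalue $\lambda_1^\Bcal$ of $\Bcal^\conjT\Bcal$. Diagonalizing $\Lcal$ by $\Fbf\otimes\Fbf\otimes\Fbf$, its eigenvalues are $\sum_{i=1}^3|d_i(p_i)|^2$ over multi-indices $(p_1,p_2,p_3)$, $p_i\in\{0,\dots,N-1\}$, where $d_i(p)$ is the frequency-$p$ eigenvalue of the circulant matrix $\Dbf_1+\imagunit\wnkk_i\Dbf_0$. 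Using the half-angle identities $1-e^{-\imagunit\theta}=2\imagunit\sin(\theta/2)e^{-\imagunit\theta/2}$ and $1+e^{-\imagunit\theta}=2\cos(\theta/2)e^{-\imagunit\theta/2}$ with $\theta=2\pi ph$, one computes $d_i(p)=\imagunit e^{-\imagunit\pi ph}\bigl(\tfrac{2}{h}\sin(\pi ph)+\wnkk_i\cos(\pi ph)\bigr)$, so $|d_i(p)|^2=\bigl(\tfrac{2}{h}\sin(\pi ph)+\wnkk_i\cos(\pi ph)\bigr)^2$ and $|d_i(0)|^2=\wnkk_i^2$. The statement then follows once the scalar estimate $|d_i(p)|\geq|\wnkk_i|$ is established for every $1\leq p\leq N-1$: summing over $i$ shows the symbol attains its minimum $\sum_i\wnkk_i^2=\normtwo{\wnk}^2$ at the zero frequency.

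The scalar estimate is the crux. I would set $u=\pi ph\in(0,\pi)$ and split on the sign of $\cos u$: using $|a+b|\geq|a|-|b|$, it suffices to verify $\tfrac{2}{h}\sin u\geq|\wnkk_i|(1+\cos u)$ when $\cos u\geq0$, and $\tfrac{2}{h}\sin u\geq|\wnkk_i|(1-\cos u)$ when $\cos u<0$. Via $\tfrac{\sin u}{1+\cos u}=\tan(u/2)$ and $\tfrac{\sin u}{1-\cos u}=\cot(u/2)$, these are $\tan(u/2)\geq\tfrac{h}{2}|\wnkk_i|$ and $\cot(u/2)\geq\tfrac{h}{2}|\wnkk_i|$, respectively. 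Since $1\leq p\leq N-1$ forces $\tfrac{\pi h}{2}\leq\tfrac{u}{2}\leq\tfrac{\pi}{2}-\tfrac{\pi h}{2}$, monotonicity of $\tan$ and $\cot$ on $(0,\pi/2)$, the identity $\cot(\tfrac{\pi}{2}-\tfrac{\pi h}{2})=\tan(\tfrac{\pi h}{2})$, and the elementary bound $\tan x\geq x$ together show both quantities are at least $\tfrac{\pi h}{2}$; hence both inequalities hold as soon as $\tfrac{\pi h}{2}\geq\tfrac{h}{2}|\wnkk_i|$, i.e.\ $|\wnkk_i|\leq\pi$. Together with $\wnk\neq\vec0$ and the mesh condition $h<1/\pi$, these are precisely the hypotheses in (\ref{display:kc_condition_kneq0}). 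The points needing care are the sign bookkeeping across the two cases and the observation that only the grid frequencies $u\geq\pi h$ appear; the continuous symbol need not dominate $\wnkk_i^2$ throughout $(0,\pi h)$, but that interval carries no frequency, which is exactly what legitimizes the lower bound $u/2\geq\pi h/2$.
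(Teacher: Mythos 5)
Your proof is correct, and part (1) coincides with the paper's argument (circulant normality plus the Kronecker mixed-product property). Part (2), however, takes a genuinely different and arguably cleaner route. The paper works with the circulant $\Kbf_i^\conjT\Kbf_i$, writes its first row in terms of $c_i=\frac{1}{h}+\imagunit\frac{\wnkk_i}{2}$, and parametrizes the eigenvalues as $\mu_j^{(i)}=2|c_i|^2\bigl(1-\cos(\phi_i+2j\pi h)\bigr)$ with $\tan\phi_i=\frac{4\wnkk_ih}{4-\wnkk_i^2h^2}$; the minimizing grid index is then located via the mean value theorem bound $|\phi_i|<\pi h$, and the value $\wnkk_i^2$ is recovered by substituting $\cos\phi_i$. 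You instead diagonalize $\Kbf_i$ itself, so that the half-angle identities give the symbol $d_i(p)=\imagunit e^{-\imagunit\pi ph}\bigl(\tfrac{2}{h}\sin(\pi p h)+\wnkk_i\cos(\pi p h)\bigr)$, which is exactly the paper's quantity in disguise (one checks $2|c_i|\sin(\tfrac{\phi_i}{2}+\pi ph)=\tfrac{2}{h}\sin(\pi p h)+\wnkk_i\cos(\pi p h)$), and then bound $|d_i(p)|\geq|\wnkk_i|$ for $p\neq0$ by a reverse triangle inequality reduced to $\tan(u/2)\geq\tfrac{h}{2}|\wnkk_i|$ or $\cot(u/2)\geq\tfrac{h}{2}|\wnkk_i|$. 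This avoids the mean value theorem entirely and is more elementary; it also does not actually invoke the mesh bound $h<1/\pi$ (the hypothesis is cited at the end but unused in the estimates, since $|\wnkk_i|\leq\pi$ and $p\geq1$ already force $\tan(u/2)\geq u/2\geq\pi h/2\geq\tfrac{h}{2}|\wnkk_i|$), so in effect you have proved the statement under a weaker assumption. What the paper's parametrization buys is the explicit identification of the nearest competing grid frequency, which is mildly more informative but not needed for the lemma.
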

\begin{proof}
A direct computation of $\Acal\Acal^\conjT+\Bcal^\conjT\Bcal$ yields a block diagonal matrix, provided that $\Dcal_i^\conjT\Dcal_j=\Dcal_j\Dcal_i^\conjT$ for $i\neq j$. This commutativity is 
guaranteed by the mixed product property of Kronecker products \eqref{display:kc_mixedproductprop}. The identity $\Dcal_i^\conjT\Dcal_i=\Dcal_i\Dcal_i^\conjT$ holds because $\Kbf_i:=\Dbf_1+\imagunit\wnkk_i\Dbf_0$ is a circulant matrix and thus normal, 
so is each block $\Dcal_i$; We establish \eqref{display:kc_Lmat}.

To prove (2), since $\Bcal^\conjT\Bcal$ shares the same nonzero eigenvalues 
with $\Lcal=\Bcal\Bcal^\conjT$, we analyze the spectrum of $\Lcal$. 
The matrix $\Kbf_i^\conjT\Kbf_i$ is circulant, 
and by setting $c_i=\frac{1}{h}+\imagunit\frac{\wnkk_i}{2}$, 
its first row is $$(2|c_i|^2,-c_i^2,0,\cdots,0,-\overline{c_i}^2).$$
Applying Lemma \ref{lem:kc:diagofcirc}, its eigenvalues are
\begin{align}\label{display:kc_lapblockeigs}
\mu^{(i)}_j:=\lambda_j(\Kbf_i^\conjT\Kbf_i)=2\left(|c_i|^2-\text{Re}(c_i^2\omega^{j})\right)
=\left(\frac{2}{h^2}+\frac{\wnkk_i^2}{2}\right)\left(1-\cos(\phi_i+2j\pi h)\right),
\end{align}
for $j=1,\cdots N$ and $\tan\phi_i=\frac{4\wnkk_ih}{4-\wnkk_i^2h^2}$. The spectrum of $\Lcal=I_{N^2}\otimes(\Kbf_1^\conjT\Kbf_1)+I_N\otimes(\Kbf_2^\conjT\Kbf_2)\otimes I_N
+(\Kbf_3^\conjT\Kbf_3)\otimes I_{N^2}$ is then presented by
\begin{align}\label{display:kc_Leig}
\left\{\mu^{(1)}_{j_1}+\mu^{(2)}_{j_2}+\mu^{(3)}_{j_3}:1\leq j_1,j_2,j_3\leq N\right\}.
\end{align}

We then aim to find the minimum of $\mu_j^{(i)}$, which is $\min\{\mu_{N-1}^{(i)},\mu_N^{(i)}\}$. We define $f(x)=\arctan\left(\frac{4x}{4-x^2}\right)$, where $x=\wnkk_ih$ is such that $|x|<1$ due to the condition \eqref{display:kc_condition_kneq0}. By the mean value theorem, $|f(x)|=|f(x)-f(0)|=|f'(c)x|$ for some $c$ strictly between $0$ and $x$. Since $f'(c)=\frac{4}{4+c^2}\in(0,1)$, we have $|\phi_i|=|f(x)|<|x|=|\wnkk_ih|.$ Thus $|\phi_i|<\pi h$ and
\begin{align}\label{display:kc_minKeigen}
\begin{aligned}
\min\limits_{1\leq j\leq N}\mu_j^{(i)}&
=\left(\frac{2}{h^2}+\frac{\wnkk_i^2}{2}\right)\left(1-\max\{\cos(\phi_i),\cos(2\pi h-\phi_i)\}\right)=\wnkk_i^2.
\end{aligned}
\end{align}
Here we substitute $\cos(\phi_i)=\frac{4-\wnkk_i^2h^2}{4+\wnkk_i^2h^2}$ 
derived from its tangent value into the above computation. 
Finally, due to \eqref{display:kc_Leig}, the minimum eigenvalue of $\Lcal$ is $\normtwo{\wnk}^2$, and thus $\lambda_1^\Bcal=\normtwo{\wnk}^2$.
\end{proof}

\begin{prop}\label{prop:kc_finalresult}
For the kernel compensation formulation \eqref{display:kc_formulation} with coordinate change $\Abf=I_3$, the null space $\Hcal$ and the choice of the penalty coefficient $\gamma$ are determined as follows:

(1). The null space $\Hcal$ is defined by: If $\wnk=\vec 0$, then
\begin{align}\label{display:kc_keq0}
\begin{aligned}
\Hcal=\spann\{\vec o_i\}_{i=1}^3,\quad\vec o_i\in\Rbb^{3N^3},\quad \vec (\vec o_i)_j=\begin{cases}
1,\quad\text{if }(i-1)N^3+1\leq j\leq iN^3,\\
0,\quad\text{otherwise.}
\end{cases}
\end{aligned}
\end{align}
If $\wnk\neq\vec0$ and condition \eqref{display:kc_condition_kneq0} holds, then $\Hcal=\{\vec0\}$.

(2). The following choice of $\gamma$ is sufficient for the correctness of \eqref{display:kc_formulation}:
\begin{align}\label{display:kc_settingofgamma}
\gamma=\begin{cases}
\max\left\{1,\lambda_m^\Acal\right\},\quad &\wnk=\vec 0,\\
\max\left\{1,\lambda_m^\Acal\right\}\cdot\max\left\{1,\normtwo{\wnk}^{-2}\right\},\quad &\wnk\neq\vec 0.
\end{cases}
\end{align}
\end{prop}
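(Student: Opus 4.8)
The plan is to read off both statements from Lemma~\ref{lem:kc_ABblock}, the identity $\Hcal=\ker\Acal^\conjT\cap\ker\Bcal$ supplied by Proposition~\ref{prop:kc_mainprop}, and the sufficiency criterion $\gamma\lambda_1^\Bcal\ge\lambda_m^\Acal$ recalled immediately after it. The first step is to note that, since $\Acal\Acal^\conjT$ and $\Bcal^\conjT\Bcal$ are Hermitian positive semidefinite, a vector lies in the kernel of their sum if and only if it lies in both kernels; hence, using Lemma~\ref{lem:kc_ABblock}(1),
\begin{align*}
\Hcal=\ker\Acal^\conjT\cap\ker\Bcal=\ker\bigl(\Acal\Acal^\conjT+\Bcal^\conjT\Bcal\bigr)=\ker\diag(\Lcal,\Lcal,\Lcal)=(\ker\Lcal)\times(\ker\Lcal)\times(\ker\Lcal).
\end{align*}
Thus part~(1) reduces entirely to determining $\ker\Lcal$.

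For $\wnk\neq\vec0$ under condition (\ref{display:kc_condition_kneq0}), Lemma~\ref{lem:kc_ABblock}(2) gives $\lambda_{\min}(\Lcal)=\normtwo{\wnk}^2>0$, so $\ker\Lcal=\{\vec0\}$ and $\Hcal=\{\vec0\}$. For $\wnk=\vec0$ the blocks $\Dcal_i$ collapse to $\Dcal_{1,i}$, and since every row of $\Dbf_1$ sums to zero we get $\Dcal_{1,i}\mathbf1=\vec0$ for the constant vector $\mathbf1=(1,\dots,1)^\top\in\Cbb^{N^3}$, whence $\Lcal\mathbf1=\vec0$ and $\spann\{\mathbf1\}\subseteq\ker\Lcal$. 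For the reverse inclusion I would invoke the explicit spectrum (\ref{display:kc_Leig})--(\ref{display:kc_lapblockeigs}): with $\wnkk_i=0$ one has $\phi_i=0$ and $\mu_j^{(i)}=\frac{2}{h^2}\bigl(1-\cos(2\pi jh)\bigr)$, which vanishes for $1\le j\le N$ (with $N\ge2$) only at $j=N$; since the three Kronecker summands of $\Lcal$ commute and are thus simultaneously diagonalizable, an eigenvalue $\mu^{(1)}_{j_1}+\mu^{(2)}_{j_2}+\mu^{(3)}_{j_3}$ of $\Lcal$ equals $0$ only when $j_1=j_2=j_3=N$, so $\dim\ker\Lcal=1$ and $\ker\Lcal=\spann\{\mathbf1\}$. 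Lifting back through the block-diagonal identification gives precisely $\Hcal=\spann\{\vec o_1,\vec o_2,\vec o_3\}$ of (\ref{display:kc_keq0}).

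For part~(2) it suffices, by the remark following Proposition~\ref{prop:kc_mainprop}, to verify $\gamma\lambda_1^\Bcal\ge\lambda_m^\Acal$, where $\lambda_1^\Bcal$ denotes the smallest nonzero eigenvalue of $\Bcal^\conjT\Bcal$, equivalently of $\Lcal=\Bcal\Bcal^\conjT$. When $\wnk\neq\vec0$, Lemma~\ref{lem:kc_ABblock}(2) gives $\lambda_1^\Bcal=\normtwo{\wnk}^2$, and with the stated $\gamma$ one computes $\gamma\lambda_1^\Bcal=\max\{1,\lambda_m^\Acal\}\cdot\max\{1,\normtwo{\wnk}^2\}\ge\max\{1,\lambda_m^\Acal\}\ge\lambda_m^\Acal$. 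When $\wnk=\vec0$, the spectrum above shows the smallest nonzero eigenvalue of $\Lcal$ is attained with two indices equal to $N$ and one equal to $N-1$, so $\lambda_1^\Bcal=\frac{2}{h^2}\bigl(1-\cos(2\pi h)\bigr)=2N^2\bigl(1-\cos(2\pi/N)\bigr)$; the elementary inequality $1-\cos\theta\ge\frac{2}{\pi^2}\theta^2$ on $[0,\pi]$, applied with $\theta=2\pi/N\le\pi$ for $N\ge2$, yields $\lambda_1^\Bcal\ge16\ge1$, whence $\gamma\lambda_1^\Bcal\ge\gamma=\max\{1,\lambda_m^\Acal\}\ge\lambda_m^\Acal$. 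In both cases the criterion holds, which establishes the correctness of (\ref{display:kc_formulation}).

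The main obstacle is the $\wnk=\vec0$ branch: one must pin down $\dim\ker\Lcal=1$ \emph{exactly} (not merely exhibit a single null vector) from the commuting Kronecker-sum structure of $\Lcal$, and must extract a clean mesh-independent lower bound $\lambda_1^\Bcal\ge1$ from the trigonometric eigenvalue formula (\ref{display:kc_lapblockeigs}); the $\wnk\neq\vec0$ statements then follow directly from Lemma~\ref{lem:kc_ABblock}(2), and the remainder is bookkeeping.
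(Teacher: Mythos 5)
Your proof is correct and follows essentially the same path as the paper: reduce $\Hcal$ to $(\ker\Lcal)^3$ via Lemma~\ref{lem:kc_ABblock}(1), settle the $\wnk\neq\vec0$ case directly from $\lambda_{\min}(\Lcal)>0$, identify $\ker\Lcal=\spann\{\vec 1_{N^3}\}$ when $\wnk=\vec 0$, and then verify the sufficiency criterion $\gamma\lambda_1^\Bcal\geq\lambda_m^\Acal$ in both branches. The only (cosmetic) departures are that you pin down $\dim\ker\Lcal=1$ by counting zeros in the explicit spectrum~(\ref{display:kc_lapblockeigs})--(\ref{display:kc_Leig}) rather than via the paper's shorter $\ker\Lcal=\bigcap_i\ker\Dcal_i$ argument, and for the $\wnk=\vec 0$ lower bound you invoke $1-\cos\theta\geq 2\theta^2/\pi^2$ on $[0,\pi]$ to obtain $\lambda_1^\Bcal\geq16$, whereas the paper uses $\sin x/x>\sin 1$ on $(0,1)$ to obtain $\lambda_1^\Bcal>4\pi^2\sin^2 1$; both comfortably exceed~$1$, which is all that is needed.
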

\begin{proof}
If $\wnk\neq\vec 0$, Lemma \ref{lem:kc_ABblock} shows that $\Lcal$ is invertible, 
$\ker\Lcal=\{0\}$. It follows that $$\Hcal=\ker(\Acal\Acal^\conjT+\Bcal^\conjT\Bcal)=\{\vec 0\}.$$
If $\wnk=\vec 0$, $\Lcal=\sum\limits_{i=1}^3\Dcal_i^\conjT\Dcal_i$, 
we have  $\ker\Lcal=\bigcap\limits_{i=1}^3\ker\Dcal_i$ due to the positive 
semi-definiteness. The circulant matrix $\Dbf_1$ has a 1D kernel 
spanned by the vector of all ones, $\vec 1_N$. Due to the Kronecker product 
structure of the $\Dcal_i$, their common kernel is $\ker\Lcal = \spann\{\vec 1_{N^3}\}$ 
with $\vec 1_{N^3}=\vec 1_N\otimes\vec 1_N\otimes\vec1_N$. It follows that $\Hcal$ 
is spanned by $(\vec 1_{N^3}, \vec 0, \vec 0)^\top $, $(\vec 0, \vec 1_{N^3}, \vec 0)^\top $, 
and $(\vec 0, \vec 0, \vec 1_{N^3})^\top $, which are the vectors $\{\vec o_i\}_{i=1}^3$.

To ensure $\gamma\lambda_1^\Bcal\geq\lambda_m^\Acal$: if $\wnk\neq\vec 0$, Proposition \ref{lem:kc_ABblock} reveals $\lambda_1^\Bcal=\normtwo{\wnk}^2$, the condition is satisfied; if $\wnk=\vec 0$, the smallest nonzero eigenvalue of $\Kbf_i^\conjT\Kbf_i$ equals $\frac{4}{h^2}\sin^2(\pi h)$. Since $\frac{\sin x}{x}>\sin1$ within the bound $x=\pi h\in(0,1)$, there holds $\lambda_1^\Bcal>4\pi^2(\sin1)^2$. Thus $\gamma=\max\{1,\lambda_m^\Acal\}$ is sufficient.
\end{proof}

\begin{rmk}
Condition \eqref{display:kc_condition_kneq0} is reasonable: $|\wnkk_i|\leq\pi$ is satisfied 
by construction at high symmetry points, as is shown in \eqref{display:exp_bzsym} 
in Section \ref{se:num}; $N>\pi$ holds for any reasonable grid. Furthermore, 
in our numerical experiments the desired eigenvalues are such that $\lambda_m^\Acal<4\pi^2$ (more precisely, the normalized frequency $\sqrt{\lambda_m^\Acal}/(2\pi)<1$). Thus \eqref{display:kc_settingofgamma} 
reduces to a more practical formula, where $\gamma$ is independent of the grid division $N$:
\begin{align}\label{display:kc_settingofgamma1}
\gamma=\begin{cases}
4\pi^2,\quad &\wnk=\vec0\text{ or }\normtwo{\wnk}>1,\\
4\pi^2\normtwo{\wnk}^{-2},\quad &\normtwo{\wnk}\in(0,1).
\end{cases}
\end{align}

We also point out that, for a general coordinate change matrix $\Abf$, the corresponding results of Lemma \ref{lem:kc_ABblock} (1), Proposition \ref{prop:kc_finalresult} (1) still hold. Meanwhile, the choice of $\gamma$ in \eqref{display:kc_settingofgamma1} for $\Abf=I_3$ case is also valid for our numerical examples.
\end{rmk}

In summary, Proposition \ref{prop:kc_finalresult} provides a rigorous justification 
for our kernel compensation formulation \eqref{display:kc_formulation}. It proves that for any nonzero $\wnk$, 
the null space $\Hcal$ is completely eliminated. When $\wnk=\vec 0$, $\Hcal$ 
is reduced to a three-dimensional space corresponding to constant fields. 
Moreover, it provides a simple and effective rule \eqref{display:kc_settingofgamma1} 
for choosing the penalty coefficient $\gamma$.
\begin{rmk}
A notable advantage of our finite-difference-based formulation lies in the modest penalty 
required, which leads to a relatively well-conditioned system. Our analysis 
shows $\gamma=O(1)$ is sufficient. In contrast, kernel compensation within a 
finite element framework necessitates a much larger penalty, typically $O(h^{-3})$, 
to counteract the scaling of the mass matrix. As noted in \cite{lu2022parallelpc}, 
where the penalty coefficient is chosen to be $\gamma=\frac{2}{h^3}$ in the experiment, 
spurious eigenvalues are still likely to occur. 
\end{rmk}

\subsection{Matrix-free Operations and Preconditioning}\label{sse:matfree}
In this subsection, we aim for matrix-free operation $\vec H_h\mapsto (\Acal\Mcal_\varepsilon\Acal^\conjT+\gamma\Bcal^\conjT\Bcal)\vec H_h$ and the approach 
to preconditioning the system matrix $\Acal\Mcal_\varepsilon\Acal^\conjT+\gamma\Bcal^\conjT\Bcal$ 
effectively using DFT. 
We define the $N^3\times N^3$ 3D DFT matrix $\Fcal$ as the tensor product of three 1D DFT matrices:
\begin{align}
\Fcal=\Fbf\otimes\Fbf\otimes \Fbf.
\end{align}
Applying the mixed product property \eqref{display:kc_mixedproductprop} 
and Lemma \ref{lem:kc:diagofcirc}, we establish the $\Fcal$-diagonalization of matrices $\Dcal_i$ for $i=1,2,3$.

\begin{prop}\label{prop:kc:matfreeblocks}
Assuming $\Dbf_i=\Fbf\Lambda_i\Fbf^\conjT$ for $i=0,1$, where diagonal matrices $\Lambda_i$ are derived from \eqref{display:kc_diagofcirc}, the matrices $\Dcal_i$ admit the factorization
\begin{align}\label{display:matrixfree:Dblockfac}
\Dcal_i=\Fcal\Kcal_i\Fcal^\conjT,\quad i=1,2,3,\quad\text{ where }
\left\{
\begin{aligned}
&\Kcal_1=I_{N^2}\otimes(\Lambda_1+\imagunit\wnkk_1\Lambda_0),\\
&\Kcal_2=I_N\otimes(\Lambda_1+\imagunit\wnkk_2\Lambda_0)\otimes I_N,\\
&\Kcal_3=(\Lambda_1+\imagunit\wnkk_3\Lambda_0)\otimes I_{N^2}.
\end{aligned}
\right.
\end{align}
If we further define $\Fcal_3:=\diag(\Fcal,\Fcal,\Fcal)$, the matrices $\Acal,\Bcal^\conjT\Bcal$ are factorized as:
\begin{align}\label{display:matrixfree:ABfac}
\begin{aligned}
&\Acal=\Fcal_3\Kcal_{\Acal}\Fcal_3^\conjT,\quad \Bcal^\conjT\Bcal=\Fcal_3\Kcal_{\Bcal}\Fcal_3^\conjT,\quad\text{where} \\
&\Kcal_{\Acal}=\begin{pmatrix}
\ & -\Kcal_3 &\Kcal_2\\ \Kcal_3 & \ &-\Kcal_1\\ -\Kcal_2 & \Kcal_1 & \
\end{pmatrix},\quad \Kcal_\Bcal=\begin{pmatrix}
\Kcal_1^\conjT\\\Kcal_2^\conjT\\ \Kcal_3^\conjT
\end{pmatrix}\begin{pmatrix}
\Kcal_1 & \Kcal_2 & \Kcal_3
\end{pmatrix}.
\end{aligned}
\end{align}
\end{prop}
\begin{proof}
The result follows directly from previous propositions.
\end{proof}

Since $ \Acal \Mcal_\varepsilon\Acal^\conjT+\gamma\Bcal^\conjT\Bcal = \Fcal_3( \Kcal_\Acal\Fcal^\conjT_3\Mcal_\varepsilon\Fcal_3\Kcal_\Acal^\conjT+\gamma\Kcal_\Bcal)\Fcal_3^\conjT$, 
by eliminating $\Fcal_3$ on both sides the kernel compensation formulation \eqref{display:kc_formulation} 
is simplified into the equivalent system:
\begin{align}\label{display:matrixfree_kcform}
\left(\Kcal_{\Acal}\Fcal^\conjT_3\Mcal_\varepsilon\Fcal_3\Kcal_\Acal^\conjT+\gamma \Kcal_\Bcal\right)\vec x 
= \omega_h^2 \vec x,\quad \vec x=\Fcal^\conjT_3\vec H_h.
\end{align}
To construct an effective preconditioner, we consider an approximation by neglecting the middle term $\Fcal^\conjT_3\Mcal_\varepsilon\Fcal_3$. This yields the following matrix:
\begin{align}\label{display:matrixfree_diagP}
\Kcal_\Pcal:=\Kcal_\Acal\Kcal_\Acal^\conjT+\gamma\Kcal_\Bcal.
\end{align}
Therefore, once we select an iterative algebraic eigensolver, 
a complete formulation of operations during each iteration is
\begin{align}\label{display:matrixfree_finalform}
\left\{
\begin{aligned}
&\text{Original system: }\quad\left(\Kcal_{\Acal}\Fcal^\conjT_3\Mcal_\varepsilon\Fcal_3\Kcal_\Acal^\conjT
+\gamma \Kcal_\Bcal\right)\vec x = \omega_h^2 \vec x,\\
&\text{Preconditioning: }\quad\vec x\mapsto\Kcal_\Pcal^{-1}\vec x,\quad\Kcal_\Pcal=\Kcal_\Acal\Kcal_\Acal^\conjT+\gamma\Kcal_\Bcal.
\end{aligned}
\right.
\end{align}
Crucially, the matrices $\Kcal_\Acal$ and $\Kcal_\Bcal$ are assembled using the diagonal matrices defined in \eqref{display:matrixfree:Dblockfac} and \eqref{display:matrixfree:ABfac}. 
Since $\mathcal{K}_\mathcal{P}$ is a $3\times 3$ block matrix where each block is diagonal, 
its inverse $\mathcal{K}_\mathcal{P}^{-1}$ can be trivially computed. Meanwhile, explicit 
construction of sparse matrices is unnecessary since manipulations of these $\Kcal$-matrices are reduced to piecewise vector multiplication, which significantly enhances GPU acceleration efficiency.

\begin{prop}
The condition number of the preconditioned system \eqref{display:matrixfree_finalform} 
is bounded by the condition number of the permittivity matrix $\Mcal_\varepsilon$.
\end{prop}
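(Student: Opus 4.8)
The plan is to show that the preconditioned operator $\Kcal_\Pcal^{-1}(\Kcal_\Acal\Fcal_3^\conjT\Mcal_\varepsilon\Fcal_3\Kcal_\Acal^\conjT + \gamma\Kcal_\Bcal)$, restricted to the orthogonal complement of its kernel, is spectrally equivalent to the identity up to the condition number of $\Mcal_\varepsilon$. The starting observation is that by (\ref{display:matrixfree_diagP}) the preconditioner $\Kcal_\Pcal = \Kcal_\Acal\Kcal_\Acal^\conjT + \gamma\Kcal_\Bcal$ is exactly the system matrix (\ref{display:matrixfree_kcform}) with the middle factor $\Fcal_3^\conjT\Mcal_\varepsilon\Fcal_3$ replaced by the identity. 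Since $\Fcal_3$ is unitary, $\Fcal_3^\conjT\Mcal_\varepsilon\Fcal_3$ is HPD with the same spectrum as $\Mcal_\varepsilon$, so $\lambda_{\min}(\Mcal_\varepsilon)\,I \preceq \Fcal_3^\conjT\Mcal_\varepsilon\Fcal_3 \preceq \lambda_{\max}(\Mcal_\varepsilon)\,I$ in the Loewner order.

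The key step is a sandwiching argument. Conjugating the middle inequality by $\Kcal_\Acal$ (and using that the $\gamma\Kcal_\Bcal$ term is untouched) gives, for every vector $\vec x$,
\begin{align}\label{display:precond_sandwich}
\lambda_{\min}(\Mcal_\varepsilon)\,\vec x^\conjT\Kcal_\Pcal\vec x \;\leq\; \vec x^\conjT\big(\Kcal_\Acal\Fcal_3^\conjT\Mcal_\varepsilon\Fcal_3\Kcal_\Acal^\conjT + \gamma\Kcal_\Bcal\big)\vec x \;\leq\; \lambda_{\max}(\Mcal_\varepsilon)\,\vec x^\conjT\Kcal_\Pcal\vec x,
\end{align}
where one must first check that the $\gamma\Kcal_\Bcal$ piece contributes the same quantity $\gamma\|\Kcal_\Bcal^{1/2}\vec x\|^2$ on all three sides, so the inequalities only act on the $\Kcal_\Acal\cdots\Kcal_\Acal^\conjT$ part: indeed $\gamma\lambda_{\min}(\Mcal_\varepsilon)\Kcal_\Bcal \preceq \gamma\Kcal_\Bcal$ fails when $\lambda_{\min}(\Mcal_\varepsilon)>1$, so the clean statement requires treating the two summands separately and noting $\Kcal_\Acal\Fcal_3^\conjT\Mcal_\varepsilon\Fcal_3\Kcal_\Acal^\conjT \preceq \lambda_{\max}(\Mcal_\varepsilon)\Kcal_\Acal\Kcal_\Acal^\conjT$ and likewise below, then adding $\gamma\Kcal_\Bcal$ to each term — the inequality (\ref{display:precond_sandwich}) as written is valid precisely because $\Kcal_\Bcal$ appears with coefficient exactly $\gamma$ on both matrices being compared. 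From (\ref{display:precond_sandwich}), on the complement of $\ker\Kcal_\Pcal$ (which coincides with the kernel $\Hcal$ characterized in Proposition \ref{prop:kc_finalresult}, since $\Fcal_3$ is unitary and the middle factor is HPD), the generalized Rayleigh quotient of the pair $(\Kcal_\Acal\Fcal_3^\conjT\Mcal_\varepsilon\Fcal_3\Kcal_\Acal^\conjT + \gamma\Kcal_\Bcal,\ \Kcal_\Pcal)$ lies in $[\lambda_{\min}(\Mcal_\varepsilon), \lambda_{\max}(\Mcal_\varepsilon)]$, hence the preconditioned eigenvalues lie in that interval and
\begin{align}\label{display:precond_condbound}
\kappa\big(\Kcal_\Pcal^{-1}(\Kcal_\Acal\Fcal_3^\conjT\Mcal_\varepsilon\Fcal_3\Kcal_\Acal^\conjT + \gamma\Kcal_\Bcal)\big) \;\leq\; \frac{\lambda_{\max}(\Mcal_\varepsilon)}{\lambda_{\min}(\Mcal_\varepsilon)} \;=\; \kappa(\Mcal_\varepsilon).
\end{align}

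The main obstacle is the kernel bookkeeping: $\Kcal_\Pcal$ is singular exactly when $\wnk=\vec0$ (three-dimensional kernel) by Proposition \ref{prop:kc_finalresult}, so "condition number" must be interpreted as the ratio of the largest to the smallest \emph{nonzero} eigenvalue, and one has to verify that $\ker\Kcal_\Pcal = \ker(\Kcal_\Acal\Fcal_3^\conjT\Mcal_\varepsilon\Fcal_3\Kcal_\Acal^\conjT + \gamma\Kcal_\Bcal)$ so that the preconditioned pencil is well-defined on the quotient. This follows because $\Fcal_3^\conjT\Mcal_\varepsilon\Fcal_3$ is HPD, so $\vec x^\conjT\Kcal_\Acal\Fcal_3^\conjT\Mcal_\varepsilon\Fcal_3\Kcal_\Acal^\conjT\vec x = 0 \iff \Kcal_\Acal^\conjT\vec x = 0 \iff \vec x^\conjT\Kcal_\Acal\Kcal_\Acal^\conjT\vec x = 0$, and similarly the $\Kcal_\Bcal$ terms vanish together; the intersection of the two kernels is the common nullspace on both sides. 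A secondary point worth stating carefully is that (\ref{display:precond_sandwich}) does genuinely hold as written — the term $\gamma\Kcal_\Bcal$ is common to both compared matrices, so only the $\Mcal_\varepsilon$-weighted block is scaled, and adding an identical positive-semidefinite matrix to both sides of a Loewner inequality preserves it. Everything else is a one-line application of the Courant–Fischer characterization of generalized eigenvalues.
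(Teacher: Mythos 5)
You take the same route as the paper: bound the generalized Rayleigh quotient of the pair $(\Kcal_\Acal\Fcal_3^\conjT\Mcal_\varepsilon\Fcal_3\Kcal_\Acal^\conjT + \gamma\Kcal_\Bcal,\ \Kcal_\Pcal)$ by the extremal eigenvalues of $\Mcal_\varepsilon$, using that $\Fcal_3$ is unitary; the kernel bookkeeping you add is a useful elaboration the paper's proof skips. But the two-sided Loewner bound you display, $\lambda_{\min}(\Mcal_\varepsilon)\,\vec x^\conjT\Kcal_\Pcal\vec x \leq \vec x^\conjT(\Kcal_\Acal\Fcal_3^\conjT\Mcal_\varepsilon\Fcal_3\Kcal_\Acal^\conjT + \gamma\Kcal_\Bcal)\vec x \leq \lambda_{\max}(\Mcal_\varepsilon)\,\vec x^\conjT\Kcal_\Pcal\vec x$, is false as written, and your surrounding sentence notices why and then denies it. Expanding $\lambda_{\min}(\Mcal_\varepsilon)\Kcal_\Pcal = \lambda_{\min}\Kcal_\Acal\Kcal_\Acal^\conjT + \lambda_{\min}\gamma\Kcal_\Bcal$, the left inequality needs $\lambda_{\min}\gamma\Kcal_\Bcal \preceq \gamma\Kcal_\Bcal$, i.e.\ $\lambda_{\min}(\Mcal_\varepsilon)\leq 1$, which is exactly the obstruction you flag mid-paragraph; the closing claim that it is ``valid because $\Kcal_\Bcal$ appears with coefficient exactly $\gamma$ on both matrices'' does not rescue it, since on the left the coefficient is actually $\lambda_{\min}\gamma$.

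The correct sandwich obtained by scaling only the $\Kcal_\Acal$-part and then adding $\gamma\Kcal_\Bcal$ to each side reads $\lambda_{\min}\Kcal_\Acal\Kcal_\Acal^\conjT + \gamma\Kcal_\Bcal \preceq \Kcal_\Acal\Fcal_3^\conjT\Mcal_\varepsilon\Fcal_3\Kcal_\Acal^\conjT + \gamma\Kcal_\Bcal \preceq \lambda_{\max}\Kcal_\Acal\Kcal_\Acal^\conjT + \gamma\Kcal_\Bcal$, and dividing by $\vec x^\conjT\Kcal_\Pcal\vec x$ only places the preconditioned spectrum in $[\min\{\lambda_{\min}(\Mcal_\varepsilon),1\},\ \max\{\lambda_{\max}(\Mcal_\varepsilon),1\}]$. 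To recover the stated bound $\kappa(\Mcal_\varepsilon)$ you need the additional (easy) observation that $1$ is an eigenvalue of $\Mcal_\varepsilon$: by (\ref{display:intro_permittivity}), $\Mcal_\varepsilon$ acts as the identity on DoFs in the vacuum region $\Omega_0$, so $\lambda_{\min}(\Mcal_\varepsilon)\leq 1\leq\lambda_{\max}(\Mcal_\varepsilon)$ (and under Assumption \ref{display:proof_assump_eigen} in fact $\lambda_{\max}(\Mcal_\varepsilon)=1$). With that noted, your two-sided bound and the conclusion do hold. The paper's Rayleigh-quotient argument makes the same implicit jump --- the fraction $(\lambda a+c)/(a+c)$ stays in $[\lambda_{\min},\lambda_{\max}]$ for all $a,c\geq 0$ only when $\lambda_{\min}\leq1\leq\lambda_{\max}$ --- so making the observation explicit would tighten both proofs.
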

\begin{proof}
We consider the Rayleigh quotient $\rho(\cdot)$ ($\normtwo{\cdot}$ is vector 2-norm):
\begin{align}
\rho(\vec x):=\frac{\vec x^*\Kcal_\Acal\hat\Mcal\Kcal_\Acal^\conjT\vec x
+\gamma\normtwo{\Kcal_\Bcal\vec x}^2}{\normtwo{\Kcal_\Acal^\conjT\vec x}^2
+\gamma\normtwo{\Kcal_\Bcal\vec x}^2},\qquad \hat\Mcal=\Fcal^\conjT_3\Mcal_\varepsilon\Fcal_3,
\end{align}
where $\vec x\in\Cbb^{3N^3}$ and superscript ``$*$'' refers to vector  conjugate transpose. 
Since $\Fcal_3$ is unitary, $\hat\Mcal$ has exactly the same spectrum as $\Mcal_\varepsilon$. 
The inequality
$$
\lambda_{\min}(\Mcal_\varepsilon)\normtwo{\vec y}^2\leq\vec y^*\Mcal_\varepsilon\vec y\leq \lambda_{\max}(\Mcal_\varepsilon)\normtwo{\vec y}^2
$$
holds for any $\vec y\in\Cbb^{3N^3}$, which implies the range of the Rayleigh quotient 
is bounded by the minimum and maximum eigenvalues of $\Mcal_\varepsilon$.
\end{proof}

\subsection{Matrix-free High-order Discretization}\label{sse:kc:highord}
In the final part of this section, we investigate the design of high-order discretizations. Due to the discontinuity of the dielectric permittivity $\varepsilon^{-1}$ at material interfaces, the magnetic field $\vec H$ generally does not possess high regularity. From a theoretical perspective, neither a high-order finite element scheme using adaptive mesh nor a high-order finite difference scheme is able to reach the desired convergence order. Nevertheless, a high-order discretization can still bring unexpected improvements in accuracy under certain cases.

To design high-order discretizations, we simply replace the first rows of $\Dbf_0$ and $\Dbf_1$ with the corresponding 1D symmetric finite difference stencils. While \eqref{display:yee:D0D1} corresponds to a second order stencil, for a general $2m$-th order stencil, we have:
\begin{align}\label{display:kc:highord:stencil}
\begin{aligned}
\left(\frac{d\phi}{dx}\right)_{j-\frac{1}{2}}&= \frac{1}{h}\sum\limits_{s=1}^mc_s^{(1)}\left(\phi_{j+s-1}-\phi_{j-s}\right)+O(h^{2m}),\\
\phi_{j-\frac{1}{2}}&=\sum\limits_{s=1}^mc_s^{(0)}\left(\phi_{j+s-1}+\phi_{j-s}\right)+O(h^{2m}).
\end{aligned}
\end{align}
By eliminating lower-order terms via Taylor expansion, the coefficients $c_s^{(1)}$ and $c_s^{(0)}$ can be determined by solving the following linear system:
\begin{align}\label{display:kc:high:coef}
\begin{pmatrix}
\left(\frac{2t-1}{2}\right)^{2s+k-2}
\end{pmatrix}_{1\leq s,t\leq m}\begin{pmatrix}
c_s^{(k)}
\end{pmatrix}_{1\leq s\leq m}=\begin{pmatrix}
\frac{1}{2} & 0 & \cdots & 0
\end{pmatrix}^\top.
\end{align}
The coefficients for 1D finite difference stencils of orders 2, 4, 6, 8, and 10, derived from \eqref{display:kc:high:coef}, are summarized in Table \ref{tab:kc:fd1d}.

\begin{table}[htb]
    \centering
    \caption{Coefficients of 1D symmetric finite difference stencil}
    \label{tab:kc:fd1d}
    \begin{tabular}{ccc}
        \toprule
        $m$ & $c_1^{(1)},\cdots,c_m^{(1)}$ & $c_1^{(0)},\cdots,c_m^{(0)}$ \\
        \midrule
        1 & $1$ & $\displaystyle\frac{1}{2}$ \\
        \midrule
        2 & $\frac{9}{8},-\frac{1}{24}$ & $\frac{9}{16},-\frac{1}{16}$ \\
        \midrule
        3 & $\frac{25}{64},-\frac{25}{384},\frac{3}{640}$ & $\frac{75}{128},-\frac{25}{256},\frac{3}{256}$ \\
        \midrule
        4 & $\frac{1225}{1024},-\frac{245}{3072},\frac{49}{5120},-\frac{5}{7168}$ &
          $\frac{1225}{2048},-\frac{245}{2048},\frac{49}{2048},-\frac{5}{2048}$ \\
        \midrule
        5 & $\frac{19845}{16384},-\frac{735}{8192},\frac{567}{40960}, -\frac{405}{229376}, \frac{35}{294912}$ & $\frac{19845}{32768},-\frac{2205}{16384}, \frac{567}{16384}, -\frac{405}{65536}, \frac{35}{65536}$ \\
        \bottomrule
    \end{tabular}
\end{table}
Previous conclusions can be naturally extended to high-order cases: by using the results of Lemma \ref{lem:kc:diagofcirc} and Proposition \ref{prop:kc:matfreeblocks}, high-order discrete operators can be constructed within a matrix-free framework. Moreover, such an improvement in accuracy is achieved without causing additional time or memory expenses: In numerical implementations, we first solve coefficients $c_s^{(0)}$ and $c_s^{(1)}$ based on the desired order $2m$, then compute the diagonal matrices $\Lambda_0$, $\Lambda_1$ by Lemma \ref{lem:kc:diagofcirc} and construct diagonal blocks $\Kcal_i$ ($i=1,2,3$) by Proposition \ref{prop:kc:matfreeblocks}. The diagonal blocks $\Kcal_i$ are stored in a complex array of length $3N^3$, diagonal elements and upper diagonal elements of $\Kcal_\Bcal$, $\Kcal_\Pcal^{-1}$ are stored by two complex arrays and two real (float) arrays (of length $3N^3$). The total memory usage of storing $\Kcal_\Acal,\Kcal_\Bcal$ and $\Kcal_\Pcal^{-1}$ sums up to $192N^3$ bytes with double precision.

\begin{rmk}
    The result for null space $\Hcal$ of the kernel compensation and the penalty coefficient $\gamma$, as is discussed in Proposition \ref{prop:kc_finalresult}, holds for high-order discretization. The proof lies in analyzing the spectrum of $\Bcal\Bcal^\conjT$ with high-order stencil, which involves computing the eigenvalues of $\Kbf_i^\conjT\Kbf_i$ by Lemma \ref{lem:kc:diagofcirc}. The derivation follows the same process as the second-order case. 
\end{rmk}

\begin{rmk}
We give a brief summary of Section \ref{se:mfd}: Yee's scheme approximates the model problems \eqref{display:intro_magneticshifted} and \eqref{display:ct_magneticCT}, arriving at a preliminary result \eqref{display:ct_preliminaryresult}. 
Kernel compensation formulation \eqref{display:kc_formulation} 
eliminates the null space. Finally \eqref{display:matrixfree_finalform} provides a preconditioned 
formulation that allows matrix-free operations for implementing $\Acal,\Bcal^\conjT\Bcal$ and preconditioning. 
Formulation \eqref{display:matrixfree_finalform} is adopted for numerical
experiments in Section \ref{se:num}.
\end{rmk}

\section{Treatment of Permittivity \texorpdfstring{$\varepsilon$}{epsilon}}\label{se:permittivity}
To ensure the correctness and robustness of kernel compensation formulation \eqref{display:kc_formulation}, 
$\Mcal_\varepsilon$ is strictly required to be HPD. This section details 
how we construct the map $\vec E_h\mapsto \Mcal_\varepsilon\vec E_h$. 
Subsection \ref{sse:diagonal} discusses the simplest case for a diagonal $\varepsilon_1$. When $\varepsilon_1$ has nonzero off-diagonal entries, 
we encounter the critical issue of DoF misalignment. Subsection \ref{sse:mismatch} explains the numerical difficulty and presents two explicit solutions. Rigorous proofs of HPD properties are provided in Subsection \ref{sse:proof}. We specify the following notations to avoid ambiguity of subscripts:
\begin{align}\label{display:permittivity_notation}
\begin{aligned}
&(\Abf)_{ij}\text{: the }(i,j)\text{ entry of matrix }\Abf;\ \ 
(\Dbf)_i \text{: the }i\text{-th entry of diagonal matrix }\Dbf;\\ 
&\Mcal_{ij}\text{: the }(i,j)\text{ matrix block of a block matrix};\\
&\Mbf_k\text{: the }k\text{-th diagonal block of a block diagonal matrix}.
\end{aligned}
\end{align}
The permittivity is denoted by $\varepsilon_1=(\varepsilon_{ij})_{1\leq i,j\leq 3}$, where $\varepsilon_{ij}$'s denote the entries of $\varepsilon_1$. We also introduce edge and volume indicator matrices $\Ical_\Ecal\in\Rbb^{3N^3\times3N^3}$, $\Ical_\Vcal\in\Rbb^{N^3\times N^3}$ to distinguish the DoFs located within the material subdomain $\Omega_1$:
\begin{align}\label{display:permittivity_indicatormatrix}
\begin{aligned}
&(\Ical_{\Ecal})_{\beta}=\begin{cases}1,\quad\beta\text{-th edge DoF is located in }\Omega_1,\\
0,\quad\text{otherwise},\end{cases}\quad \Ical_\Ecal=\text{diag}(\Ical_1,\Ical_2,\Ical_3),\\&(\Ical_{\Vcal})_{\beta}=\begin{cases}1,\quad\beta\text{-th volume DoF is located in }\Omega_1,\\
0,\quad\text{otherwise}.\end{cases}
\end{aligned}
\end{align}
Matrix $\Ical_\Ecal$ can be naturally decomposed into three $N^3\times N^3$ diagonal blocks, 
where each $\Ical_i$ corresponds to the indicator matrix for edge-DoFs aligned along the $i$-th axis. 

\subsection{Diagonal Cases}\label{sse:diagonal}
The construction is straightforward when the material permittivity $\varepsilon_1$ is a diagonal matrix, i.e. $\varepsilon_1=\diag(\varepsilon_{11},\varepsilon_{22},\varepsilon_{33}),\ \varepsilon_{ii}>0$.  
In this case the dielectric matrix $\Mcal_{\varepsilon}$ is also diagonal:
\begin{align}\label{display:permittivity_diagonal_case}
\begin{aligned}
&\Mcal_{\varepsilon}=\text{diag}(\Mcal_{11},\Mcal_{22},\Mcal_{33}),\quad \Mcal_{ii}
=(\varepsilon_{ii}-1)\Ical_i+I_{N^3},
\end{aligned}
\end{align}
where $\Ical_i$ is defined in \eqref{display:permittivity_indicatormatrix}. 
We adopt a direct pointwise evaluation of $(\Mcal_{ii})_\beta$. While other approaches, 
such as that in \cite{Lyu2021fame}, 
employ a local average of surrounding values, our simpler treatment is 
sufficient to maintain the accuracy of the scheme.

\subsection{Misalignment of DoFs}\label{sse:mismatch}
A challenge arises when $\varepsilon_1$ contains nonzero off-diagonal entries 
(e.g., $\varepsilon_{12} = \varepsilon_{21}^* \neq 0$), which couple different electric field components. 
The staggered nature of the grid leads to a misalignment of DoFs: The evaluation of 
the term for the $x$-component of the electric field requires the quantity
$\varepsilon_{12} E_2$, while $E_2$ is approximated by $y$-edge DoFs, a direct 
evaluation of $E_2$ on any $x$-aligned edge is not available. This misalignment problem 
occurs for all off-diagonal coupling terms. We propose a trivial solution and 
a more physically accurate interpolation-based method to address this numerical difficulty.

\subsubsection{A Trivial Solution}\label{sssec:trivial}
The simplest approach is to completely neglect the spatial mismatch on the off-diagonal blocks and consider $\Ical_1,\Ical_2,\Ical_3$ as equivalent with $\Ical_\Vcal$. In this case we define
\begin{align}\label{display:trivial_trivial}
\mattrivial:=\begin{pmatrix}\Mcal_{11} & \varepsilon_{12}\Ical_\Vcal&\varepsilon_{13}\Ical_\Vcal\\ \varepsilon_{12}^*\Ical_\Vcal & \Mcal_{22}& \varepsilon_{23}\Ical_\Vcal\\ \varepsilon_{13}^*\Ical_\Vcal&\varepsilon_{23}^*\Ical_\Vcal&\Mcal_{33}\end{pmatrix}.
\end{align}
Although geometrically inaccurate, this approach produces a simply structured matrix, 
allowing matrix-free implementations of $\vec E_h\mapsto\mattrivial\vec E_h$.

\subsubsection{Cross-DoF Transfer via Interpolation}\label{sssec:crossdof}
A more physically faithful approach is to bridge the misalignment by interpolation. 
We can approximate the value of a field component at a desired location 
by averaging the values from its nearest neighbors on the staggered grid. 
We adopt the notations in \eqref{display:yee_dofmap}: for any $\vec E_h$ consisting of edge-DoFs, 
there holds:
\begin{align}\label{display:crossdof_xytemplate}
E^1_{i-\frac{1}{2},j,k}=\frac{1}{4}\left(E^2_{i-1,j-\frac{1}{2},k}+E^2_{i-1,j-\frac{1}{2},k}
+E^2_{i,j-\frac{1}{2},k}+E^2_{i,j+\frac{1}{2},k}\right)+O(h^2),
\end{align}
where $O(h^2)$ is the accuracy guaranteed by the smoothness of the original field $\vec E$. 
Due to periodicity, \eqref{display:crossdof_xytemplate} has a matrix representation using $\Dbf_0$ 
defined in \eqref{display:yee:D0D1}:
\begin{align}\label{display:crossdof_xymat}
\vec E_h^1=(\Dbf_0\otimes\Dbf_0^\top \otimes I_N)\vec E_h^2+O(h^2).
\end{align}
Likewise, we obtain a set of cross-DoF transfer matrices $\{\Tcal_{ij}\}_{1\leq i,j\leq 3}$ as follows:
\begin{align}\label{display:crossdof_Tmat}
\begin{aligned}
&\vec E^i_h=\Tcal_{ij}\vec E^j_h+O(h^2),\quad \vec E_h^j=\Tcal_{ij}^\top \vec E_h^i+O(h^2),\quad1\leq i<j\leq3,\\
&\Tcal_{12}=\Dbf_0\otimes\Dbf_0^\top \otimes I_N,\quad \Tcal_{13}=\Dbf_0\otimes I_N\otimes\Dbf_0^\top ,\quad \Tcal_{23}=I_N\otimes\Dbf_0\otimes\Dbf_0^\top.
\end{aligned}
\end{align}

With these transfer operators, one might think it is sufficient to simply apply the edge indicator matrices $\Ical_1,\Ical_2,\Ical_3$ afterwards. However, a naive construction of the off-diagonal block, $\varepsilon_{12}\Ical_1\Tcal_{12}$ for example, would lead to a non-Hermitian $\Mcal_\varepsilon$, as $(\varepsilon_{12}\Ical_1\Tcal_{12})^\conjT=\varepsilon_{12}^*\Tcal_{12}^\top \Ical_1$ 
does not equal the corresponding (2,1) block $\varepsilon_{12}^*\Ical_2\Tcal_{12}^\top $. 
To ensure Hermiticity, we construct symmetrized cross-DoF transfer operators and formulate the matrix as:
\begin{align}\label{display:crossdof_herm}
\matcrossdof
:=\begin{pmatrix}
\Mcal_{11} & \varepsilon_{12}\Scal_{12}&\varepsilon_{13}\Scal_{13}\\ \varepsilon_{12}^*\Scal_{12}^\top  
& \Mcal_{22}& \varepsilon_{23}\Scal_{23}\\ \varepsilon_{13}^*\Scal_{13}^\top &\varepsilon_{23}^*\Scal_{23}^\top 
&\Mcal_{33}\end{pmatrix},\quad \Scal_{ij}=\frac{\Ical_i\Tcal_{ij}+\Tcal_{ij}\Ical_j}{2},\quad 1\leq i<j\leq3.
\end{align}
In practice, $\matcrossdof$ is explicitly assembled as a sparse matrix due to its relatively 
complicated structure. Although this solution consumes slightly more storage, the efficiency 
is barely affected, as is shown in the experiments.

\subsection{Proof of HPD Property}\label{sse:proof}
We provide analysis for the HPD properties of the discrete permittivity matrix 
$\mattrivial$ and $\matcrossdof$, which are expected to be HPD whenever the $3\times3$ matrix $\varepsilon_1$ is HPD. Unfortunately, 
the HPD property may not be preserved without the following assumptions:
\begin{assump}\label{display:proof_assump_eigen}
Eigenvalues of $\varepsilon_1$ lie in $(0,1]$.
\end{assump}

\begin{assump}\label{display:proof_assump_sdd}
Matrix $\varepsilon_1$ is strictly diagonally dominant (SDD): 
$\varepsilon_{ii}>\sum\limits_{j\neq i}|\varepsilon_{ij}|$, $i=1,2,3$.
\end{assump}

\begin{assump}\label{display:proof_assump_nonzero}
At least one off-diagonal entry of $\varepsilon_1$ is zero.
\end{assump}

The above assumptions are reasonable in physics. The $3\times3$ matrix
$\varepsilon_1$ represents the inverse of the normalized permittivity tensor $\varepsilon$ 
within the material domain $\Omega_1$. For any physical material, $\varepsilon$ must have 
eigenvalues no less than 1, with 1 corresponding to the vacuum. Consequently, the spectrum 
of $\varepsilon_1$ must lie in $(0,1]$, as is illustrated by Assumption \ref{display:proof_assump_eigen}. 
In anisotropic media, off-diagonal entries in $\varepsilon_1$ often arise 
from a rotation of the material's principal axes. A common case involves 
a material with anisotropy rotated only in the $xy$-plane, which leads 
to $\varepsilon_{13}=\varepsilon_{23}=0$. Assumption \ref{display:proof_assump_nonzero} 
is thus satisfied in this case. Assumption \ref{display:proof_assump_sdd} is also reasonable 
since many anisotropic materials exhibit strong diagonal dominance.

In Subsubsection \ref{sssec:prooftrivial} we prove that $\mattrivial$ 
is HPD under Assumption \ref{display:proof_assump_eigen} alone; Subsubsection \ref{sssec:proofcrossdof} 
proves that $\matcrossdof$ is HPD if Assumption \ref{display:proof_assump_eigen} 
is combined with either Assumption \ref{display:proof_assump_sdd} or \ref{display:proof_assump_nonzero}.

\subsubsection{Positive Definiteness for Matrix \texorpdfstring{$\mattrivial$}{M-Trivial}}\label{sssec:prooftrivial}
In the proofs that follow, we adhere to the notations displayed in \eqref{display:permittivity_notation}.
\begin{lem}\label{lem:proof_permmat}
For any matrix $A\in \Cbb^{N\times N}$ and permutation $\sigma$ over $\{1,\cdots,N\}$, 
if matrix $\hat A$ satisfies $(\hat A)_{ij}=(A)_{\sigma(i),\sigma(j)}$, 
then $\hat A=PAP^\top $ where permutation matrix $P$ is such 
that $(P)_{ij}=\delta_{\sigma(i),j}$ ($\delta$ is the Kronecker delta).
\end{lem}
\begin{proof}
We have $(PA)_{ij}=\sum\limits_{k=1}^N(P)_{ik}(A)_{kj}=(A)_{\sigma(i),j}$ 
and $(\hat AP)_{ij}=\sum\limits_{k=1}^N(\hat A)_{ik}(P)_{kj}=(\hat A)_{i,\sigma^{-1}(j)}$, 
thus $(PA)_{ij}=(\hat AP)_{ij}$ holds for any $1\leq i,j\leq N$. 
\end{proof}
\begin{lem}\label{lem:proof_blockdiageigen}
Let $\Mcal$ be an $mN\times mN$ Hermitian block matrix partitioned into $m\times m$ 
blocks $\Mcal=(\Mbf_{ij})_{1\leq i,j\leq m}$, each block $\Mbf_{ij}$ is $N\times N$ 
diagonal and $\Mbf_{ji}=\Mbf_{ij}^\dagger$. Then there holds:
\begin{align}
\lambda(\Mcal)=\bigcup\limits_{k=1}^N\lambda(\hat{\Mbf}_k),\quad\text{where}\ \hat{\Mbf}_k\in\Cbb^{m\times m},\ (\hat{\Mbf}_k)_{ij}=(\Mbf_{ij})_k.
\end{align}
\end{lem}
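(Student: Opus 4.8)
The plan is to exhibit a permutation of the indices under which $\Mcal$ becomes block diagonal with the $m\times m$ blocks $\hat{\Mbf}_1,\dots,\hat{\Mbf}_N$ sitting on the diagonal; since conjugation by a permutation matrix preserves the spectrum and the spectrum of a block-diagonal matrix is the union of the spectra of its blocks, the identity $\lambda(\Mcal)=\bigcup_{k=1}^N\lambda(\hat{\Mbf}_k)$ then follows immediately.

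Concretely, I would first label the rows and columns of $\Mcal$ by pairs $(i,\alpha)$ with $1\le i\le m$ the block index and $1\le\alpha\le N$ the position inside a block, ordered lexicographically, so that the entry of $\Mcal$ in row $(i,\alpha)$, column $(j,\beta)$ equals $(\Mbf_{ij})_{\alpha\beta}$; because every block $\Mbf_{ij}$ is diagonal, this entry equals $(\Mbf_{ij})_\alpha$ when $\alpha=\beta$ and is $0$ otherwise. Next I would take $\sigma$ to be the ``interleaving'' permutation of $\{1,\dots,mN\}$ that reorders the indices so that the position $\alpha$ becomes the slow index and the block number $i$ the fast one, i.e., the permutation sending the new slot $(\alpha,i)$ to the old slot $(i,\alpha)$. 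Applying Lemma \ref{lem:proof_permmat} to this $\sigma$, the reindexed matrix $\widehat\Mcal=P\Mcal P^\top$ has in row $(\alpha,i)$, column $(\beta,j)$ the entry $(\Mbf_{ij})_{\alpha\beta}$, which by the previous observation vanishes unless $\alpha=\beta$. Hence $\widehat\Mcal$ is block diagonal with $N$ blocks, the $\alpha$-th being $\bigl((\Mbf_{ij})_\alpha\bigr)_{1\le i,j\le m}=\hat{\Mbf}_\alpha$; that is, $\widehat\Mcal=\diag(\hat{\Mbf}_1,\dots,\hat{\Mbf}_N)$.

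Finally, since $P$ is orthogonal, $\Mcal$ and $\diag(\hat{\Mbf}_1,\dots,\hat{\Mbf}_N)$ are similar, so their spectra coincide, and the latter equals $\bigcup_{k=1}^N\lambda(\hat{\Mbf}_k)$. I do not anticipate a genuine obstacle: the entire argument is bookkeeping with the index bijection, and the single point requiring care is noting that the diagonality of each $\Mbf_{ij}$ is precisely what makes the shuffled matrix genuinely block diagonal rather than full. The Hermiticity hypothesis $\Mbf_{ji}=\Mbf_{ij}^\dagger$ is not needed for this set equality; it only guarantees that each $\hat{\Mbf}_k$ is itself Hermitian, which is what matters for the positive-definiteness conclusions drawn afterwards. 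If one prefers to bypass permutation matrices, the same conclusion can be reached by checking directly that each eigenpair $(\mu,\vec v)$ of $\hat{\Mbf}_k$, with $\vec v\in\Cbb^m$, lifts to the eigenvector of $\Mcal$ whose $(i,\alpha)$-component is $v_i\,\delta_{\alpha k}$ and eigenvalue $\mu$, and that such lifted vectors, over all $k$ and all eigenpairs, span $\Cbb^{mN}$.
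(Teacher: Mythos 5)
Your proof is correct and follows essentially the same route as the paper's: both reindex by the interleaving permutation, invoke Lemma \ref{lem:proof_permmat} to show $\Mcal$ is permutation-similar to $\diag(\hat{\Mbf}_1,\dots,\hat{\Mbf}_N)$, and conclude the spectra agree. Your remark that Hermiticity is not needed for the set equality (only for Hermiticity of each $\hat{\Mbf}_k$) is a correct and slightly sharper observation than what the paper states.
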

\begin{proof}
We introduce matrix $\hat\Mcal$ such that $(\hat\Mcal)_{ij}=(\Mcal)_{\sigma(i),\sigma(j)}$, 
where $\sigma$ is a permutation over $\{1,\cdots,mN\}$ defined by:
\begin{align}
\sigma(i)=(i-(k-1)m-1)N+k,\quad (k-1)m+1\leq i\leq km,\quad k=1,\cdots,N.
\end{align}
According to the diagonal condition, $(\Mcal)_{ij}\neq0$ if and only if $i\equiv j\mod N$,
thus $(\hat\Mcal)_{ij}\neq 0$ if and only if $\sigma(i)\equiv\sigma(j)\mod N$, 
which means $i,j$ lay in the same interval $[(k-1)m+1,km]$, which is equivalent to saying
\begin{align}
\hat\Mcal=\diag(\hat{\Mbf}_1,\cdots,\hat{\Mbf}_N),\quad \hat{\Mbf}_k\in\Cbb^{m\times m}.
\end{align}
Meanwhile, $(\hat{\Mbf}_k)_{ij}=(\hat{\Mcal})_{(k-1)m+i,(k-1)m+j}=(\Mcal)_{(i-1)N+k,(j-1)N+k}=(\Mbf_{ij})_k$, also $(\hat{\Mbf}_k)_{ji}=(\Mbf_{ji})_k=(\Mbf_{ij}^\conjT)_k=(\hat{\Mbf}_k)_{ij}^\conjT$, 
thus $\hat{\Mbf}_k$ is also Hermitian. Finally, according to Lemma \ref{lem:proof_permmat}, 
$\Mcal$ shares exactly same eigenvalues with $\hat\Mcal$, thus the desired result holds.
\end{proof}

\begin{prop}\label{prop:proof_trivialHPD} 
The matrix $\mattrivial$ defined by \eqref{display:trivial_trivial} 
in Subsubsection \ref{sssec:trivial} satisfies
\begin{align}\label{display:proof_lowboundsmeig}
\lambda_{\min}(\mattrivial)\geq\lambda_{\min}(\hat\varepsilon_1),
\end{align}
where 
\begin{align}\label{display:proof_HPDcond1}
\hat\varepsilon_1:=\begin{pmatrix}
\hat\varepsilon_{11} & \varepsilon_{12} & \varepsilon_{13} \\
\varepsilon_{12}^* &\hat\varepsilon_{22} &\varepsilon_{23}\\
\varepsilon_{13}^* & \varepsilon_{23}^*&\hat\varepsilon_{33}
\end{pmatrix},\quad \hat\varepsilon_{ii}=\min\{\varepsilon_{ii},1\}.
\end{align}
\end{prop}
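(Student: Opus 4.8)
The plan is to collapse the $3N^3\times 3N^3$ eigenvalue problem into $N^3$ decoupled $3\times 3$ problems via Lemma \ref{lem:proof_blockdiageigen}, and then bound each $3\times 3$ block from below by $\hat\varepsilon_1$ using a positive-semidefinite perturbation argument.

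First I would check that $\Mcal_{\text{Trivial}}$ fits the hypotheses of Lemma \ref{lem:proof_blockdiageigen}: it is Hermitian ($\Mbf_{ji}=\Mbf_{ij}^\conjT$ since $\varepsilon_{ji}=\varepsilon_{ij}^*$ and each $\Mcal_{ii}$ is real symmetric), and it is partitioned into $3\times 3$ blocks each of which is an $N^3\times N^3$ diagonal matrix (the diagonal blocks $\Mcal_{ii}=(\varepsilon_{ii}-1)\Ical_i+I_{N^3}$ are diagonal, and the off-diagonal blocks $\varepsilon_{ij}\Ical_\Vcal$ are diagonal). Hence $\lambda(\Mcal_\varepsilon)=\bigcup_{k=1}^{N^3}\lambda(\hat\Mbf_k)$, and it suffices to prove $\lambda_{\min}(\hat\Mbf_k)\geq\lambda_{\min}(\hat\varepsilon_1)$ for every index $k$.

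Next I would write $\hat\Mbf_k$ explicitly. Its diagonal entry is $d_i^{(k)}:=(\varepsilon_{ii}-1)(\Ical_i)_k+1$, which equals $\varepsilon_{ii}$ if the $k$-th $i$-axis edge DoF lies in $\Omega_1$ and equals $1$ otherwise; in either case $d_i^{(k)}\geq\hat\varepsilon_{ii}=\min\{\varepsilon_{ii},1\}$. Its off-diagonal entry ($i\neq j$) is $\varepsilon_{ij}(\Ical_\Vcal)_k$, equal to $\varepsilon_{ij}$ if the $k$-th volume DoF lies in $\Omega_1$ and $0$ otherwise. I would then split on the value of $(\Ical_\Vcal)_k$. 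If $(\Ical_\Vcal)_k=1$, the off-diagonal part of $\hat\Mbf_k$ coincides with that of $\hat\varepsilon_1$, so $\hat\Mbf_k=\hat\varepsilon_1+D_k$ with $D_k=\diag\bigl(d_1^{(k)}-\hat\varepsilon_{11},\,d_2^{(k)}-\hat\varepsilon_{22},\,d_3^{(k)}-\hat\varepsilon_{33}\bigr)$ positive semidefinite; Weyl's inequality then gives $\lambda_{\min}(\hat\Mbf_k)\geq\lambda_{\min}(\hat\varepsilon_1)+\lambda_{\min}(D_k)\geq\lambda_{\min}(\hat\varepsilon_1)$. If $(\Ical_\Vcal)_k=0$, then $\hat\Mbf_k=\diag(d_1^{(k)},d_2^{(k)},d_3^{(k)})$, whose smallest eigenvalue is $\min_i d_i^{(k)}\geq\min_i\hat\varepsilon_{ii}\geq\lambda_{\min}(\hat\varepsilon_1)$, the final step because the smallest eigenvalue of a Hermitian matrix never exceeds any of its diagonal entries. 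Taking the minimum over $k$ yields (\ref{display:proof_lowboundsmeig}).

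I do not expect a serious obstacle; the only point requiring care is the bookkeeping that the "on/off" pattern of the volume indicator at index $k$ is independent of that of the three edge indicators at the same index (a volume DoF may lie outside $\Omega_1$ while a co-indexed edge DoF lies inside, and vice versa), so the diagonal and off-diagonal patterns must be handled separately — but the uniform bound $d_i^{(k)}\geq\hat\varepsilon_{ii}$ is all that is needed, and it holds in every configuration. Finally I would remark that under Assumption \ref{display:proof_assump_eigen} one has $\varepsilon_{ii}\leq\lambda_{\max}(\varepsilon_1)\leq 1$, so $\hat\varepsilon_1=\varepsilon_1$ and (\ref{display:proof_lowboundsmeig}) sharpens to $\lambda_{\min}(\Mcal_\varepsilon)\geq\lambda_{\min}(\varepsilon_1)>0$, which establishes that $\Mcal_{\text{Trivial}}$ is HPD.
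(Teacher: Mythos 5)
Your proof is correct and follows essentially the same route as the paper's: apply Lemma \ref{lem:proof_blockdiageigen} to reduce to the decoupled $3\times 3$ blocks $\hat\Mbf_k$, split on the value of $(\Ical_\Vcal)_k$, observe that when the volume indicator is on the block dominates $\hat\varepsilon_1$ by a positive-semidefinite diagonal perturbation, and handle the off case via the Rayleigh-quotient bound $\hat\varepsilon_{ii}\geq\lambda_{\min}(\hat\varepsilon_1)$. Your version spells out the Weyl-inequality step and the independence of the edge and volume indicator patterns a bit more explicitly, but the argument is the same one the paper gives.
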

\begin{proof}
Due to Lemma \ref{lem:proof_blockdiageigen}, 
we have $\lambda_{\min}(\mattrivial)=\min\limits_{1\leq k\leq N}\lambda_{\min}(\Mbf_k)$, where
\begin{align}
\Mbf_k=\begin{pmatrix}
(\varepsilon_{11}-1)(\Ical_1)_k+1 & \varepsilon_{12}(\Ical_\Vcal)_k & \varepsilon_{13}(\Ical_\Vcal)_k \\
\varepsilon_{12}^*(\Ical_\Vcal)_k& (\varepsilon_{22}-1)(\Ical_2)_k+1& \varepsilon_{23}(\Ical_\Vcal)_k\\
\varepsilon_{13}^*(\Ical_\Vcal)_k & \varepsilon_{23}^*(\Ical_\Vcal)_k & (\varepsilon_{33}-1)(\Ical_3)_k+1
\end{pmatrix}.
\end{align}
When $(\Ical_\Vcal)_k=0$, $\Mbf_k$ is diagonal with positive entries; 
when $(\Ical_\Vcal)_k=1$, $\Mbf_k\geq\hat\varepsilon_1$. Therefore
$\lambda_{\min}(\mattrivial)\geq\min\{\lambda_{\min}(\hat\varepsilon_1)$, 
$\hat\varepsilon_{11}$, $\hat\varepsilon_{22}$, $\hat\varepsilon_{33}\}$. 
Moreover, $\hat\varepsilon_{ii}=\vec e_i^\top \hat\varepsilon_1\vec e_i\geq \lambda_{\min}(\hat\varepsilon_1)$ 
due to minimality of Rayleigh quotient, 
we have $\lambda_{\min}(\mattrivial)\geq\lambda_{\min}(\hat\varepsilon_1)$. 
\end{proof}

Proposition \ref{prop:proof_trivialHPD} proves that 
Assumption \ref{display:proof_assump_eigen} is sufficient for $\mattrivial$ to be HPD.

\subsubsection{Positive Definiteness for Matrix \texorpdfstring{$\matcrossdof$}{M-CrossDoF}}\label{sssec:proofcrossdof}
We start by considering $\Scal_{ij}$'s matrix norms, which can help prove 
the HPD property under the SDD condition of Assumption \ref{display:proof_assump_sdd}.
\begin{prop}\label{prop:proof_Smatrix}
The matrices $\{\Scal_{ij}\}_{1\leq i<j\leq 3}$ defined in \eqref{display:crossdof_herm} satisfies:
\begin{align}\label{display:proof_Smatrixnrm}
\|\Scal_{ij}\|_1\leq1,\quad\|\Scal_{ij}\|_2\leq1,\quad \|\Scal_{ij}\|_\infty\leq1.
\end{align}
Here, $\|\cdot\|_1,\|\cdot\|_2,\|\cdot\|_\infty$ refer to the matrix column sum, 
spectral norm and row sum, respectively. 
\end{prop}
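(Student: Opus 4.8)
The plan is to analyze each $\Scal_{ij} = \tfrac12(\Ical_i \Tcal_{ij} + \Tcal_{ij}\Ical_j)$ entrywise, exploiting the fact that the three factors involved are very explicit: each $\Ical_i$ is a $0$–$1$ diagonal matrix, and each $\Tcal_{ij}$ is a Kronecker product of the nonnegative averaging matrix $\Dbf_0$ (and its transpose) with identity factors. First I would record that $\Dbf_0$ has nonnegative entries and exactly two $1/2$'s in every row and every column, so $\|\Dbf_0\|_1 = \|\Dbf_0\|_\infty = 1$; by the mixed product property and the multiplicativity of these norms over Kronecker products, the same holds for each $\Tcal_{ij}$, i.e. $\|\Tcal_{ij}\|_1 = \|\Tcal_{ij}\|_\infty = 1$, and all entries of $\Tcal_{ij}$ are nonnegative. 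Left-multiplying by the diagonal $0$–$1$ matrix $\Ical_i$ only zeroes out some rows, so $\|\Ical_i\Tcal_{ij}\|_\infty \le 1$ and $\|\Ical_i\Tcal_{ij}\|_1 \le 1$; similarly $\|\Tcal_{ij}\Ical_j\|_1 \le 1$ and $\|\Tcal_{ij}\Ical_j\|_\infty \le 1$. Taking the average and using the triangle inequality for $\|\cdot\|_1$ and $\|\cdot\|_\infty$ gives $\|\Scal_{ij}\|_1 \le 1$ and $\|\Scal_{ij}\|_\infty \le 1$ immediately.

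The spectral-norm bound then follows without further work: $\Scal_{ij}$ has nonnegative entries, and the bound $\|\Scal_{ij}\|_2 \le \sqrt{\|\Scal_{ij}\|_1\,\|\Scal_{ij}\|_\infty} \le 1$ is a standard consequence of the Riesz–Thorin interpolation / Schur test for matrix norms. Alternatively, one can argue directly: $\|\Scal_{ij}\|_2^2 = \lambda_{\max}(\Scal_{ij}^\top\Scal_{ij}) \le \|\Scal_{ij}^\top\Scal_{ij}\|_\infty \le \|\Scal_{ij}^\top\|_\infty\|\Scal_{ij}\|_\infty = \|\Scal_{ij}\|_1\|\Scal_{ij}\|_\infty \le 1$.

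The only genuine subtlety — and the step I would be most careful about — is the claim that left- or right-multiplication by a $0$–$1$ diagonal matrix does not increase the $\ell^1$ or $\ell^\infty$ operator norm; this is clear since such multiplication either kills a row/column or leaves it untouched, and the relevant norm is a max of row (resp. column) absolute sums. One should also double-check that $\Dbf_0$ as defined in (\ref{display:yee_circulantblock}) indeed has precisely two nonzero entries equal to $1/2$ in each row and column (including the wrap-around entry), which is immediate from its circulant structure; the transpose $\Dbf_0^\top$ has the same property. Beyond that, the argument is entirely a matter of assembling these elementary norm estimates, so I expect no real obstacle.
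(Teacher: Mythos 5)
Your proof is correct and follows the same overall skeleton as the paper's — reduce to norm estimates for $\Dbf_0$, propagate through Kronecker products, absorb the $\Ical_i$ factors via submultiplicativity and the triangle inequality — but it differs in one genuine respect: the treatment of the spectral norm. The paper establishes $\|\Dbf_0\|_2 = 1$ directly by invoking Lemma \ref{lem:kc_diagofcirc} (circulant matrices are diagonalized by the DFT, so the spectral norm is read off from the explicit eigenvalues), and then uses Kronecker multiplicativity of $\|\cdot\|_2$ to get $\|\Tcal_{ij}\|_2 = 1$. You instead derive the 2-norm bound from the already-established 1-norm and $\infty$-norm bounds via the interpolation/Schur-test inequality $\|A\|_2 \le \sqrt{\|A\|_1\,\|A\|_\infty}$. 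Your route is more elementary and self-contained — it sidesteps the circulant eigendecomposition entirely and, importantly, bounds $\|\Scal_{ij}\|_2$ directly after the $\Ical_i$ factors have been absorbed, rather than relying on the (correct but unstated) submultiplicative step $\|\Ical_i\Tcal_{ij}\|_2 \le \|\Ical_i\|_2\|\Tcal_{ij}\|_2$. The paper's route is shorter given that Lemma \ref{lem:kc_diagofcirc} is already in hand and is reused elsewhere. You are also somewhat more explicit than the paper about why left/right multiplication by a $0$--$1$ diagonal matrix does not increase $\|\cdot\|_1$ or $\|\cdot\|_\infty$; the paper leaves this as an implicit step behind the phrase ``it suffices to prove the same properties for $\Dbf_0$.'' Both proofs are valid.
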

\begin{proof}
It suffices to prove the same properties for $\Dbf_0$. $\|\Dbf_0\|_1=\|\Dbf_0\|_{\infty}=1$ is trivial due to \eqref{display:yee_block}. By Lemma \eqref{lem:kc:diagofcirc}, it is known that $\|\Dbf_0\|_2=1$. 
Therefore Kronecker 2-norm of products of $\Dbf_0,\Dbf_0^\top $ and $I_N$ also equals 1, 
which implies $\|\Scal_{ij}\|_2\leq1$. 
Next we verify that $\|A\otimes B\|_\infty=\|A\|_\infty\|B\|_\infty$ 
for any matrix $A$, $B$. If this holds, then $\|\Scal_{ij}\|_\infty\leq1$ holds. 
Also $\|A\otimes B\|_1=\|A^\top \otimes B^\top \|_\infty=\|A^\top \|_\infty\|B^\top \|_\infty=\|A\|_1\|B\|_1$, 
which implies $\|\Scal_{ij}\|_1\leq1$.

For simplicity we assume $A=(a_{ij})$, $B=(b_{st})\in\Rbb^{N\times N}$. 
Since $(A\otimes B)_{I,J}=a_{ij}b_{st}$, where $I=(i-1)N+s,J=(j-1)N+t$, thus 
$$
\|A\otimes B\|_\infty=\max_I \sum\limits_{J}|(A\otimes B)_{I,J}|
=\max_{i,s}\sum\limits_{j,t}^N|a_{ij}b_{st}|
=\max_i\sum\limits_j|a_{ij}|\max_s\sum\limits_t|b_{st}|=\|A\|_\infty\|B\|_\infty,
$$
where $I,J$ vary from $1$ to $N^2$ 
and $i,j,s,t$ vary from $1$ to $N$. The proof is completed.
\end{proof}

\begin{prop}\label{prop:proof_sdd}
If Assumptions \ref{display:proof_assump_eigen} and \ref{display:proof_assump_sdd} hold, then $\matcrossdof$ is SDD. Hence $\matcrossdof$ is HPD according to Gerschogrin's theorem \cite[Theorem 7.2.1]{golub13}.
\end{prop}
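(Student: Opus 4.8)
The plan is to verify that $\Mcal_{\text{CrossDoF}}$ is strictly diagonally dominant (row-wise) as a $3N^3\times 3N^3$ matrix, and then conclude positive definiteness from Gershgorin's circle theorem together with the Hermiticity built into (\ref{display:crossdof_herm}). The key structural observation, which I would state first, is that each diagonal block $\Mcal_{ii}=(\varepsilon_{ii}-1)\Ical_i+I_{N^3}$ is itself a diagonal matrix and so contributes nothing off the diagonal; hence, for a row index $\alpha$ lying in the $i$-th block-row, every off-diagonal entry of $\Mcal_{\text{CrossDoF}}$ in that row comes from exactly the two off-diagonal blocks of block-row $i$. For block-row $1$ these are $\varepsilon_{12}\Scal_{12}$ and $\varepsilon_{13}\Scal_{13}$, so the off-diagonal absolute row sum equals $|\varepsilon_{12}|\,r_\alpha(\Scal_{12})+|\varepsilon_{13}|\,r_\alpha(\Scal_{13})$, where $r_\alpha(\cdot)$ is the absolute row sum at row $\alpha$; by Proposition \ref{prop:proof_Smatrix}, $\|\Scal_{ij}\|_\infty\le 1$, so this sum is $\le|\varepsilon_{12}|+|\varepsilon_{13}|$. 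For block-rows $2$ and $3$ the relevant off-diagonal blocks carry the transposes $\Scal_{ij}^\top$, whose absolute row sums equal the absolute column sums of $\Scal_{ij}$, so here I would instead invoke $\|\Scal_{ij}\|_1\le 1$ from the same proposition, yielding off-diagonal row sums bounded by $|\varepsilon_{12}|+|\varepsilon_{23}|$ and $|\varepsilon_{13}|+|\varepsilon_{23}|$ respectively.

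Next I would bound the diagonal entry from below. Since $\varepsilon_1$ is HPD with spectrum in $(0,1]$ by Assumption \ref{display:proof_assump_eigen}, we have $0<\varepsilon_{ii}=\vec e_i^\top\varepsilon_1\vec e_i\le\lambda_{\max}(\varepsilon_1)\le 1$, hence $\min\{\varepsilon_{ii},1\}=\varepsilon_{ii}$; therefore every diagonal entry of $\Mcal_{ii}$ is either $\varepsilon_{ii}$ or $1$, in particular a positive real number that is at least $\varepsilon_{ii}$. Combining this with the SDD hypothesis $\varepsilon_{ii}>\sum_{j\ne i}|\varepsilon_{ij}|$ of Assumption \ref{display:proof_assump_sdd}, I obtain for every row $\alpha$ in block-row $i$ that the diagonal entry is $\ge\varepsilon_{ii}>\sum_{j\ne i}|\varepsilon_{ij}|\ge$ the off-diagonal absolute row sum computed above. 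Since this holds for all rows of all three block-rows, $\Mcal_{\text{CrossDoF}}$ is strictly diagonally dominant with positive real diagonal.

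Finally, $\Mcal_{\text{CrossDoF}}$ is Hermitian (its $(j,i)$ block is the conjugate transpose of the $(i,j)$ block and each $\Mcal_{ii}$ is real diagonal), so its eigenvalues are real, and by Gershgorin each eigenvalue $\lambda$ satisfies $|\lambda-\Mcal_{\alpha\alpha}|\le R_\alpha$ for some row $\alpha$, where $R_\alpha$ is the off-diagonal absolute row sum; strict diagonal dominance then forces $\lambda\ge\Mcal_{\alpha\alpha}-R_\alpha>0$, so $\Mcal_{\text{CrossDoF}}$ is HPD. I do not anticipate a real obstacle: the only point demanding care is bookkeeping — tracking which off-diagonal blocks carry $\Scal_{ij}$ versus $\Scal_{ij}^\top$ so that the correct estimate ($\|\cdot\|_\infty$ or $\|\cdot\|_1$) from Proposition \ref{prop:proof_Smatrix} is applied, and observing that the diagonality of each $\Mcal_{ii}$ is precisely what confines all off-diagonal mass in a row to just two transfer operators.
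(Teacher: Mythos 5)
Your proposal is correct and follows essentially the same route as the paper's (very terse) proof: both reduce to the bounds $\|\Scal_{ij}\|_1,\|\Scal_{ij}\|_\infty\leq1$ from Proposition \ref{prop:proof_Smatrix} and the lower bound $\varepsilon_{ii}$ on the diagonal entries (the paper phrases this as ``shrinking $\Mcal_{ii}$ to $\varepsilon_{ii}I_{N^3}$''), then invoke Assumption \ref{display:proof_assump_sdd} and Gershgorin. You have simply filled in the bookkeeping — which transfer block needs $\|\cdot\|_\infty$ versus $\|\cdot\|_1$, and why Assumption \ref{display:proof_assump_eigen} forces $\min\{\varepsilon_{ii},1\}=\varepsilon_{ii}$ — that the paper leaves implicit.
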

\begin{proof}
$\matcrossdof$ being SDD is a straightforward result 
from $\|\Scal_{ij}\|_1,\|\Scal_{ij}\|_\infty\leq1$ by shrinking the diagonal 
blocks $\Mcal_{ii}$ to $\varepsilon_{ii}I_{N^3}$.
\end{proof}

Next we prove a general conclusion in verifying positive-definiteness of Hermitian 
block matrix to verify the sufficiency of Assumption \ref{display:proof_assump_nonzero}, 
as is illustrated in the following lemma.
\begin{lem}\label{lem:proof_HPDblockmatrix}
We assume a set of $m^2$ matrices $\{\Scal_{ij}\}_{1\leq i<j\leq m}$, 
where $\Scal_{ij}\in\Cbb^{N\times N}$, to satisfy: $\Scal_{ii}=I_N$ 
for $i=1,\cdots,N$ and $\Scal_{ji}=\Scal_{ij}^\conjT,\ \|\Scal_{ij}\|_2\leq1$ 
for $1\leq i<j\leq N$. We define $mN\times mN$ block matrix $\Mcal$ 
with blocks $\Mcal:=\{\varepsilon_{ij}\Scal_{ij}\}_{1\leq i,j\leq m}$, 
where $(\varepsilon_{ij})$ forms an $m\times m$ HPD matrix. 
If $\varepsilon$ is tridiagonal, then $\Mcal$ is HPD.
\end{lem}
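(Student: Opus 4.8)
The plan is to reduce the positive-definiteness of the $mN\times mN$ block matrix $\Mcal$ to that of the scalar $m\times m$ matrix $\varepsilon$, by evaluating the quadratic form $\vec x^{\conjT}\Mcal\vec x$ on an arbitrary vector and controlling the off-diagonal contributions using the hypothesis $\normtwo{\Scal_{ij}}\le 1$ together with the tridiagonal structure of $\varepsilon$. First I would record that $\Mcal$ is Hermitian: its $(i,i)$ blocks are $\varepsilon_{ii}I_N$ with $\varepsilon_{ii}\in\Rbb$, while the $(j,i)$ block equals $\varepsilon_{ji}\Scal_{ji}=\overline{\varepsilon_{ij}}\,\Scal_{ij}^{\conjT}=(\varepsilon_{ij}\Scal_{ij})^{\conjT}$, i.e.\ the conjugate transpose of the $(i,j)$ block, using that $\varepsilon$ is Hermitian and $\Scal_{ji}=\Scal_{ij}^{\conjT}$. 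Since $\varepsilon$ is tridiagonal, the only nonzero off-diagonal blocks of $\Mcal$ are $\varepsilon_{i,i+1}\Scal_{i,i+1}$ and their conjugate transposes.

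The key step is a phase normalization. The off-diagonal support of a tridiagonal matrix is a path, hence acyclic, so one may choose recursively $\theta_1=0$ and $\theta_{i+1}=\theta_i+\pi-\arg\varepsilon_{i,i+1}$ (with the convention $\arg 0:=0$) and form the block-diagonal unitary $\tilde D:=\diag(e^{\imagunit\theta_1}I_N,\dots,e^{\imagunit\theta_m}I_N)$. A direct block computation shows that $\tilde D^{\conjT}\Mcal\tilde D$ is the block matrix assembled from the same $\Scal_{ij}$ but with $\varepsilon$ replaced by $\varepsilon':=D^{\conjT}\varepsilon D$, where $D:=\diag(e^{\imagunit\theta_1},\dots,e^{\imagunit\theta_m})$; the matrix $\varepsilon'$ is similar to $\varepsilon$ (so still HPD, and $\lambda_{\min}(\varepsilon')=\lambda_{\min}(\varepsilon)$), has the same positive diagonal entries $\varepsilon_{ii}$, and has real off-diagonal entries $\varepsilon'_{i,i+1}=-|\varepsilon_{i,i+1}|\le 0$, hence $\varepsilon'$ is a real symmetric positive definite tridiagonal matrix. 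Because $\tilde D$ is unitary, it suffices to prove the claim after this replacement, i.e.\ assuming that the off-diagonal entries of $\varepsilon$ are real and nonpositive.

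Then, for arbitrary $\vec x\in\Cbb^{mN}$ partitioned into blocks $\vec x_1,\dots,\vec x_m\in\Cbb^N$, expanding the quadratic form and combining each mirror pair of off-diagonal blocks (which are conjugate transposes of each other) into a real part gives
\begin{align*}
\vec x^{\conjT}\Mcal\vec x=\sum_{i=1}^{m}\varepsilon_{ii}\normtwo{\vec x_i}^2-2\sum_{i=1}^{m-1}|\varepsilon_{i,i+1}|\,\text{re}\!\left(\vec x_i^{\conjT}\Scal_{i,i+1}\vec x_{i+1}\right).
\end{align*}
Writing $r_i:=\normtwo{\vec x_i}\ge 0$ and $\vec r:=(r_1,\dots,r_m)^{\top}$, and using $\bigl|\text{re}(\vec x_i^{\conjT}\Scal_{i,i+1}\vec x_{i+1})\bigr|\le\normtwo{\Scal_{i,i+1}}\,r_i r_{i+1}\le r_i r_{i+1}$, one obtains
\begin{align*}
\vec x^{\conjT}\Mcal\vec x\ \ge\ \sum_{i=1}^{m}\varepsilon_{ii}r_i^2-2\sum_{i=1}^{m-1}|\varepsilon_{i,i+1}|\,r_ir_{i+1}=\vec r^{\top}\varepsilon'\vec r\ \ge\ \lambda_{\min}(\varepsilon')\normtwo{\vec r}^2=\lambda_{\min}(\varepsilon)\normtwo{\vec x}^2.
\end{align*}
Since $\varepsilon$ is HPD, $\lambda_{\min}(\varepsilon)>0$, so $\vec x^{\conjT}\Mcal\vec x>0$ whenever $\vec x\ne\vec 0$; hence $\Mcal$ is HPD, with the quantitative bound $\lambda_{\min}(\Mcal)\ge\lambda_{\min}(\varepsilon)$ in parallel to Proposition \ref{prop:proof_trivialHPD}.

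The main obstacle — really the only nontrivial idea — is the phase-normalization step: it is precisely here that the tridiagonal hypothesis cannot be dropped, because a cycle in the off-diagonal support (as for a $3\times3$ $\varepsilon$ with all three off-diagonal entries nonzero) imposes a closure constraint on $\sum(\pi-\arg\varepsilon_{i,i+1})$ around the cycle that need not be satisfiable. This is exactly why Assumption \ref{display:proof_assump_nonzero}, which (after relabeling indices) forces the off-diagonal support of the $3\times3$ matrix $\varepsilon_1$ to be a path, is what allows one to invoke this lemma, and why the commented-out three-term case requires a separate phase condition instead. The remaining ingredients — Hermiticity, the Cauchy–Schwarz bound, unitary invariance of the HPD property, and positive definiteness of $\varepsilon'$ — are all routine.
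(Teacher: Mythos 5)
Your proof is correct and takes essentially the same route as the paper: the paper picks $\theta_1=0$, $\theta_{i+1}=\theta_i-\arg(\varepsilon_{i,i+1})+\pi$ so that each cosine term equals $-1$, derives the scalar inequality $\sum_i\varepsilon_{ii}a_i^2-2\sum_i|\varepsilon_{i,i+1}|a_ia_{i+1}>0$ for nonnegative $a_i$ not all zero from $\varepsilon$ being HPD, and then bounds the block quadratic form from below by that scalar form using $\|\Scal_{i,i+1}\|_2\le 1$. Your unitary-conjugation framing is the same phase maneuver stated at the matrix level rather than inside the quadratic form; the only substantive thing it adds is the clean quantitative conclusion $\lambda_{\min}(\Mcal)\ge\lambda_{\min}(\varepsilon)$ (parallel to the bound Proposition~\ref{prop:proof_trivialHPD} gives for $\Mcal_{\text{Trivial}}$), whereas the paper only records strict positivity. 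Your remark that the tridiagonal/acyclic structure is exactly what makes the telescoping choice of $\theta_i$ consistent, and that a cycle would impose an unsatisfiable closure constraint, is accurate and is the real content of the hypothesis; the paper leaves this implicit.
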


\begin{proof}
For any $z_1,\cdots,z_m\in\Cbb$, we denote $z_i=a_i\exp(\imagunit\theta_i)$ with $a_i=|z_i|$ 
and $\theta_i=\arg z_i$. Then
\begin{align}
\begin{aligned}
0<(z_1^*,\cdots,z_m^*)\varepsilon\begin{pmatrix}
z_1\\ \vdots \\z_m
\end{pmatrix}&=\sum\limits_{i=1}^m\varepsilon_{ii}a_i^2+2\sum\limits_{i=1}^{m-1}
\text{Re}(\varepsilon_{i,i+1}z_i^*z_{i+1})\\
&=\sum\limits_{i=1}^m\varepsilon_{ii}a_i^2+2\sum\limits_{i=1}^{m-1}
|\varepsilon_{i,i+1}|a_ia_{i+1}\cos\left(\theta_{i+1}-\theta_i+\arg(\varepsilon_{i,i+1})\right).
\end{aligned}
\end{align}
By inductively setting $\theta_1=0$ and $\theta_{i+1}=\theta_i-\arg(\varepsilon_{i,i+1})+\pi$, we obtain
\begin{align}\label{display:proof_advancedcondition}
\sum\limits_{i=1}^m\varepsilon_{ii}a_i^2-2\sum\limits_{i=1}^{m-1}|\varepsilon_{i,i+1}|a_ia_{i+1}>0,
\quad \forall a_1,\cdots,a_m\geq0,\ (a_1,\cdots,a_m)\neq\vec0.
\end{align}
Then for any $\vec x_1,\cdots,\vec x_m\in\Cbb^{N}$, there holds
\begin{align}\label{display:proof_HPDblockmatrix}
\begin{aligned}
(\vec x_1^*,\cdots,\vec x_m^*)\Mcal\begin{pmatrix}
\vec x_1\\ \vdots\\ \vec x_m
\end{pmatrix}&=\sum\limits_{i=1}^m\varepsilon_{ii}\normtwo{\vec x_i}^2
+2\sum\limits_{i=1}^{m-1}\text{Re}(\varepsilon_{i,i+1}\vec x_i^*\Scal_{i,i+1}\vec x_{i+1})\\
&\geq\sum\limits_{i=1}^m\varepsilon_{ii}\normtwo{\vec x_i}^2
-2\sum\limits_{i=1}^{m-1}|\varepsilon_{i,i+1}|\normtwo{\vec x_i}\normtwo{\vec x_{i+1}},
\end{aligned}
\end{align}
where we apply the condition $\|\Scal_{ij}\|_2\leq 1$. Thus \eqref{display:proof_advancedcondition} 
implies that right-hand side of \eqref{display:proof_HPDblockmatrix} 
is nonnegative and equals zero if and only if all $\vec x_i$ vanish. 
\end{proof}

With the help of Lemma \ref{lem:proof_HPDblockmatrix}, the sufficiency of 
Assumption \ref{display:proof_assump_nonzero} is apparent.
\begin{prop}
When Assumptions \ref{display:proof_assump_eigen} and \ref{display:proof_assump_nonzero}  
hold, $\matcrossdof$ defined by \eqref{display:crossdof_herm} is HPD.
\end{prop}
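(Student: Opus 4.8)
The plan is to deduce the statement from Lemma~\ref{lem:proof_HPDblockmatrix} after two preparatory reductions: first, peeling a positive semidefinite part off each diagonal block of $\Mcal_{\text{CrossDoF}}$; and second, relabelling the three field components so that $\varepsilon_1$ becomes tridiagonal.

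First I would use Assumption~\ref{display:proof_assump_eigen}: since $\varepsilon_{ii}=\vec e_i^\top\varepsilon_1\vec e_i$ is a Rayleigh quotient of $\varepsilon_1$, each diagonal entry satisfies $0<\varepsilon_{ii}\leq\lambda_{\max}(\varepsilon_1)\leq1$. Hence each diagonal block splits as
\begin{align}
\Mcal_{ii}=(\varepsilon_{ii}-1)\Ical_i+I_{N^3}=\varepsilon_{ii}I_{N^3}+(1-\varepsilon_{ii})(I_{N^3}-\Ical_i),
\end{align}
where the second summand is positive semidefinite because $1-\varepsilon_{ii}\geq0$ and $I_{N^3}-\Ical_i$ is a diagonal $0/1$ matrix. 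Writing $\Mcal_{\text{CrossDoF}}=\Mcal'+\Mcal''$ accordingly — $\Mcal''=\diag\big((1-\varepsilon_{11})(I_{N^3}-\Ical_1),(1-\varepsilon_{22})(I_{N^3}-\Ical_2),(1-\varepsilon_{33})(I_{N^3}-\Ical_3)\big)$ positive semidefinite, and $\Mcal'$ the block matrix with diagonal blocks $\varepsilon_{ii}I_{N^3}$ and the same off-diagonal blocks $\varepsilon_{ij}\Scal_{ij}$ — it then suffices to show that $\Mcal'$ is HPD.

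Next I would invoke Assumption~\ref{display:proof_assump_nonzero}: one of $\varepsilon_{12},\varepsilon_{13},\varepsilon_{23}$ is zero. Choosing a permutation $\pi$ of $\{1,2,3\}$ that carries the two indices of that vanishing entry to positions $1$ and $3$, with associated permutation matrix $P$, the matrix $P\varepsilon_1P^\top$ is tridiagonal and, as a symmetric permutation of an HPD matrix, still HPD. The symmetrically permuted block matrix $(P\otimes I_{N^3})\Mcal'(P\otimes I_{N^3})^\top$ is then exactly of the form treated in Lemma~\ref{lem:proof_HPDblockmatrix} with $m=3$: its diagonal blocks are $(P\varepsilon_1P^\top)_{ii}I_{N^3}$, and each off-diagonal block equals $(P\varepsilon_1P^\top)_{ij}$ times one of $\Scal_{12},\Scal_{13},\Scal_{23}$ or a transpose of one of these (the block associated with the eliminated entry being identically zero), so these operators are real, satisfy the required adjoint relation, and have spectral norm at most $1$ by Proposition~\ref{prop:proof_Smatrix}. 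Lemma~\ref{lem:proof_HPDblockmatrix} then gives that this permuted matrix, hence $\Mcal'$, is HPD, and therefore $\Mcal_{\text{CrossDoF}}=\Mcal'+\Mcal''$ is HPD.

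I do not expect a serious obstacle here; the two points that need care are verifying that the diagonal entries of $\varepsilon_1$ lie in $(0,1]$ (so that the semidefinite split is legitimate) and checking that the relabelled transfer operators still meet the hypotheses of Lemma~\ref{lem:proof_HPDblockmatrix} — the spectral-norm bound coming straight from Proposition~\ref{prop:proof_Smatrix}, and the adjoint condition from the fact that the $\Scal_{ij}$ are real matrices.
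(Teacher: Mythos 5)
Your proof is correct and follows essentially the same route as the paper's: permute indices (via Lemma~\ref{lem:proof_permmat}) to make $\varepsilon_1$ tridiagonal, peel a positive-semidefinite remainder off the diagonal blocks so they become $\varepsilon_{ii}I_{N^3}$ with $\varepsilon_{ii}\in(0,1]$ by Assumption~\ref{display:proof_assump_eigen}, and then apply Proposition~\ref{prop:proof_Smatrix} together with Lemma~\ref{lem:proof_HPDblockmatrix}. The only difference is one of exposition: you spell out the "shrinking" step explicitly as the decomposition $\Mcal_{\text{CrossDoF}}=\Mcal'+\Mcal''$ with $\Mcal''\succeq 0$, which the paper states more tersely but with the same intent.
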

\begin{proof}
If at least one off-diagonal entries of $\varepsilon_1$ vanish, 
then $\varepsilon_1$ is either tridiagonal or similar to a tridiagonal 
matrix due to Lemma \ref{lem:proof_permmat}. 
Thus by shrinking diagonal entries of $\Mcal_\varepsilon$ to $\min\{1,\varepsilon_{ii}\}=\varepsilon_{ii}$, 
applying Proposition \ref{prop:proof_Smatrix} and Lemma \ref{lem:proof_HPDblockmatrix}, we conclude that $\Mcal_\varepsilon$ is HPD.
\end{proof}

\section{Numerical experiments}\label{se:num}
All calculations are carried out based on the matrix-free kernel compensation 
formulation \eqref{display:matrixfree_finalform} with penalty coefficient $\gamma$ 
set by \eqref{display:kc_settingofgamma1}. We consider the simple cubic (SC), 
face-centered cubic (FCC) and body-centered cubic (BCC) lattices. 
The primitive lattice vectors ($\vec a_1, \vec a_2, \vec a_3$) of the 
lattice types, which can be found in \cite{joannopoulos2008book}, are specified as:
\begin{align}\label{display:exp_latticeconst}
\left\{
\begin{aligned}
\text{SC lattice:}\quad & \vec a_1 = (1, 0, 0)^\top , \quad \vec a_2 = (0, 1, 0)^\top , 
\quad \vec a_3 = (0, 0, 1)^\top,\\
\text{FCC lattice:}\quad & \vec a_1 = (0, \frac{1}{2}, \frac{1}{2})^\top,  
\quad \vec a_2 = (\frac{1}{2}, 0, \frac{1}{2})^\top , \quad \vec a_3 = (\frac{1}{2}, \frac{1}{2}, 0)^\top, \\
\text{BCC lattice:}\quad & \vec a_1 = (-\frac{1}{2}, \frac{1}{2}, \frac{1}{2})^\top, 
\quad \vec a_2 = (\frac{1}{2}, -\frac{1}{2}, \frac{1}{2})^\top, 
\quad \vec a_3 = (\frac{1}{2}, \frac{1}{2}, -\frac{1}{2})^\top.
\end{aligned}
\right.
\end{align}

The wave number vector $\wnk$ in model problem \eqref{display:intro_magneticshifted} 
is derived from symmetry points and partition points on their connecting lines in the Brillouin zone. 
These symmetry points are:
\begin{align}\label{display:exp_bzsym}
\left\{
\begin{aligned} 
\text{SC: }\quad & K(0, 0, 0), L(\pi,0,0), M(\pi,\pi, 0), N(\pi,\pi,\pi), \\
\text{FCC: }\quad & X(0, 2\pi, 0), U(\frac{\pi}{2},2\pi, \pi), L(\pi,\pi,\pi), 
\Gamma(0, 0, 0), W(\pi,2\pi, 0), K(\frac{3\pi}{2}, \frac{3\pi}{2}, 0), \\
\text{BCC: }\quad & H'(2\pi, 0, 0), \Gamma(0, 0, 0), P(\pi,\pi,\pi), N(\pi, 0, \pi), H(0, 2\pi, 0). 
\end{aligned}
\right.
\end{align}

The primitive lattice vectors \eqref{display:exp_latticeconst} 
and symmetry points \eqref{display:exp_bzsym} are given in dimensionless units, 
which corresponds to setting the lattice constant to unity. 
This choice is independent of any physical scaling and is made without 
loss of generality, as such a scaling constant does not affect the results or 
efficiency. We investigate four different lattice structures, 
the unit cells of which are visually presented in Figure \ref{fig:exp_4materials}. 
These models can be found in \cite{huang2013eigendecomppc,lu2013weyl, MEHL2017S1}.

\begin{figure} 
\centering 
\begin{subfigure}[b]{0.48\textwidth} 
\centering
\includegraphics[width=0.9\textwidth]{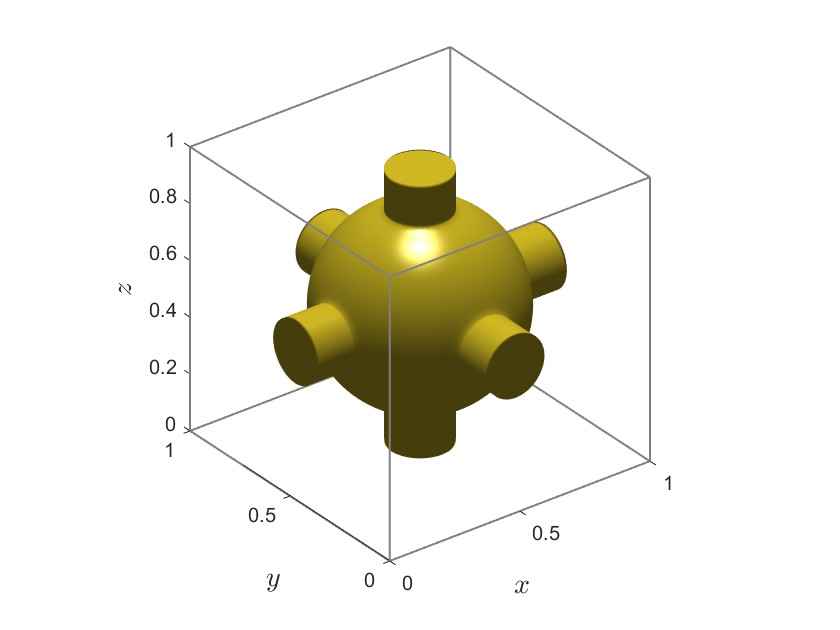} 
\caption{SC-CURV.}
\label{fig:material_sccurv}
\end{subfigure}
\hfill
\begin{subfigure}[b]{0.48\textwidth}
\centering
\includegraphics[width=0.9\textwidth]{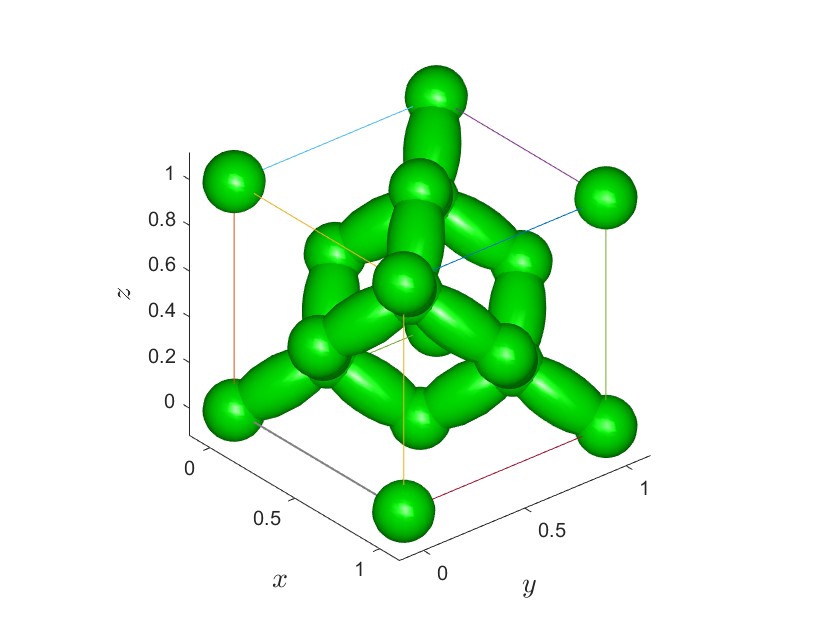}
\caption{FCC.}
\label{fig:material_fcc}
\end{subfigure}

\vspace{1em} 

\begin{subfigure}[b]{0.48\textwidth}
\centering
\includegraphics[width=0.9\textwidth]{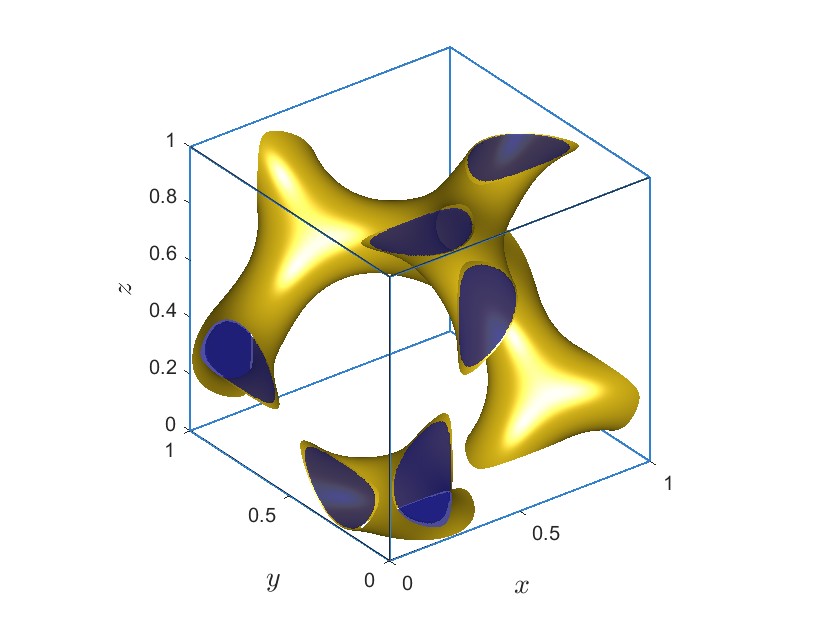}
\caption{BCC-SG.}
\label{fig:material_bccsg}
\end{subfigure}
\hfill 
\begin{subfigure}[b]{0.48\textwidth}
\centering
\includegraphics[width=0.9\textwidth]{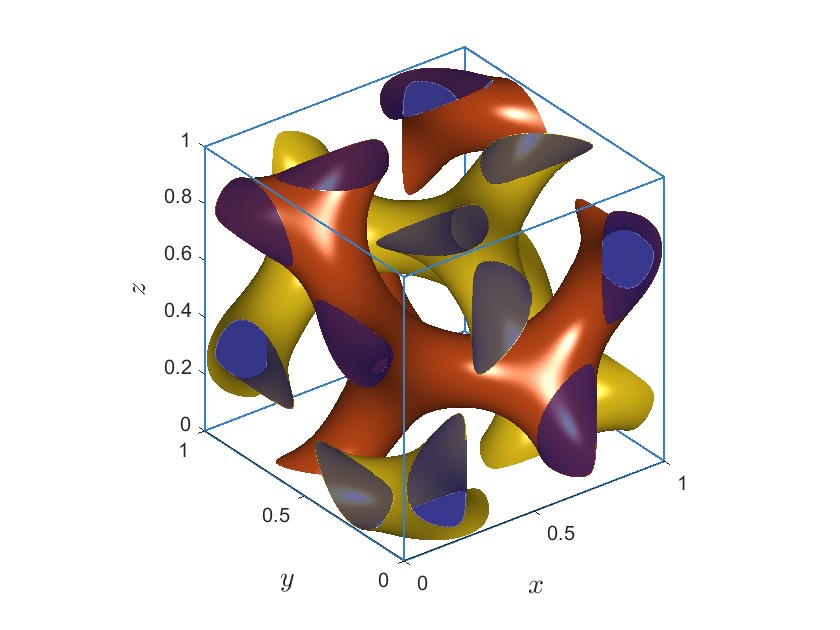}
\caption{BCC-DG.}
\label{fig:material_bccdg}
\end{subfigure}

\caption{Figures of material structures of four sample lattices in cubic domain $[0,1]^3$.}
\label{fig:exp_4materials}
\end{figure}
For simplicity, we will refer to these structures by abbreviations: 
\begin{itemize}
\item \textbf{SC-CURV (SC lattice with curved surface)}: The SC lattice shown 
in Figure \ref{fig:material_sccurv} is composed of a sphere with radius $0.345$ 
and four cylinders with base radius $0.11$.
\item \textbf{FCC}: The FCC lattice in Figure \ref{fig:material_fcc} 
adopts a diamond structure. Its material configuration is $sp^3$-like, 
consisting of spheres with radius of $0.12$ and connecting 
spheroids of minor axis $0.11$. The foci of the spheroids are located 
at the centers of the spheres.
\item \textbf{BCC-SG (BCC lattice with single gyroid surface)}: The BCC lattice 
in Figure \ref{fig:material_bccsg} has its material domain specified 
by $\left\{\vec x\in\Omega:g(\vec x)>1.1\right\}$, where
\begin{align*}
g(x,y,z)=\sin\left(2\pi x\right)\cos\left(2\pi y\right)+\sin\left(2\pi y\right)\cos\left(2\pi z\right)+\sin\left(2\pi z\right)\cos\left(2\pi x\right).
\end{align*}
\item \textbf{BCC-DG (BCC lattice with double gyroid surface)}: Similar to BCC-SG, 
the material domain of BCC lattice shown in Figure \ref{fig:material_bccdg} 
is specified by $\left\{\vec x\in\Omega:|g(\vec x)|>1.1\right\}$.
\end{itemize}

The matrix eigenproblem \eqref{display:matrixfree_finalform} is solved 
by a custom-implemented LOBPCG (Locally Optimal Block Preconditioned Conjugate Gradient) solver with soft locking \cite{knyazev2001lobpcg} in Python. 
We define the relative residual for the $j$-th eigenvalue-eigenvector 
pair $(\omega_{h,j}^2,\vec x_j)$ as
\begin{align}\label{display:exp_relativeresidual}
\text{Res}_j:=\frac{\normtwo{(\Acal\Mcal_\varepsilon\Acal^\conjT
+\gamma\Bcal^\conjT\Bcal)\vec x_j-\omega_{h,j}^2\vec x_j}}{\normtwo{\vec x_j}},
\quad \normtwo{\cdot}\text{ refers to the 2-norm}.
\end{align}
Convergence is considered to be reached when $\text{Res}_j\leq10^{-5}$ for all desired eigenpairs.

This section is outlined as: In Subsection \ref{sse:num:band}, we plot the band structures 
and analyze the differences adopting $\mattrivial$ and $\matcrossdof$. In Subsection \ref{sse:num:highord}
In Subsection \ref{sse:num:perf} the iteration count, runtime and speedup by GPU 
acceleration efficiency are recorded and analyzed. Finally we simulate extreme cases 
to demonstrate the robustness of our scheme in Subsection \ref{sse:num:custom}. 
All programs are implemented on a platform equipped with 
NVIDIA GeForce RTX 4090 D GPUs, 24 GiB of memory, 
utilizing CUDA Version 12.4 and Driver Version 550.144.03 for GPU 
acceleration. The source code has been made publicly available 
on GitHub  \url{https://github.com/Epsilon-79th/linear-eigenvalue-problems-in-photonic-crystals}.

\subsection{Band Structures of isotropic and Anisotropic Systems}\label{sse:num:band}
The expression of $\varepsilon_1$, which refers to the inverse of permittivity inside $\Omega_1$ as we define in \eqref{display:intro_permittivity}, is chosen to be 
\begin{align}\label{display:exp_permittivity}
\begin{aligned}
&\varepsilon_1=\varepsilon_{\text{lattice}}^{-1}I_3,\quad \text{for isotropic systems},\\
&\varepsilon_1 = \varepsilon_{\text{lattice}}^{-1}\begin{pmatrix}
\sqrt{1+|\beta|^2} & -|\beta|\imagunit & \ \\
|\beta|\imagunit & \sqrt{1+|\beta|^2} & \ \\
\ & \ & 1
\end{pmatrix},\quad\text{for anisotropic systems},
\end{aligned}
\end{align}
where the external magnetic field intensity $\beta=0.875$,
$\varepsilon_{\text{lattice}}$ is a constant depending on the type of 
lattice. We set $\varepsilon_{\text{lattice}}=13$ for the SC and FCC 
lattice and $\varepsilon_{\text{lattice}}=16$ for the BCC lattice. 
We compute the first 10 smallest frequencies with grid division $N=120$ 
and discrete permittivity matrix $\Mcal_\varepsilon=\matcrossdof$. 
The band structures are presented in Figures \ref{fig:band_sc_curv}--\ref{fig:band_bccdg}, 
where first 10 eigenpairs are computed for each material. 
Bandgap is an interval where no frequency exists, as is highlighted in yellow 
in the figures. The ratio of the bandgap is defined by
\begin{align}\label{display:exp_bandgapratio}
\text{gap ratio}:=\frac{\omega_{\text{up}}-\omega_{\text{low}}}{(\omega_{\text{up}}+\omega_{\text{low}})/2},
\end{align}
where $\omega_{\text{up}}$ is the minimum frequency 
above the bandgap while $\omega_{\text{low}}$ is 
the maximum frequency below the bandgap. The values of gap ratio of different 
PCs are marked in the title, 
as is shown in Figures \ref{fig:band_sc_curv}--\ref{fig:band_bccdg}. 
It can be observed that the gap ratios of SC-CURV, FCC and BCC-SG 
shrink with the presence of off-diagonal term $\varepsilon_{12}=-|\beta|\imagunit$. 
For BCC-DG, the existence of Weyl points shown in Figure \ref{fig:band_bccdg} 
coincides with the results in \cite{lu2013weyl}. 

\begin{figure}
\centering
\includegraphics[width=\linewidth]{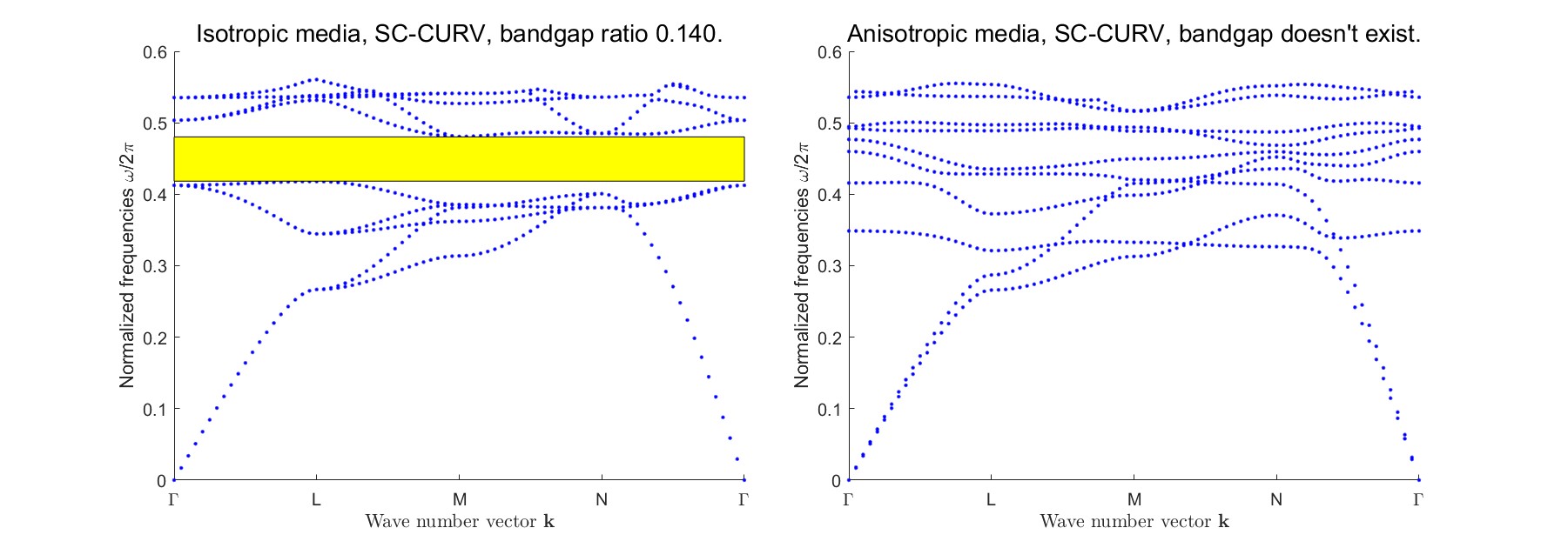}
\caption{Band structures of SC lattice with curved material interface. 
Left: isotropic system; Right: anisotropic system. Grid division $N=120$.}
\label{fig:band_sc_curv}
\end{figure}

\begin{figure}
\centering
\includegraphics[width=\linewidth]{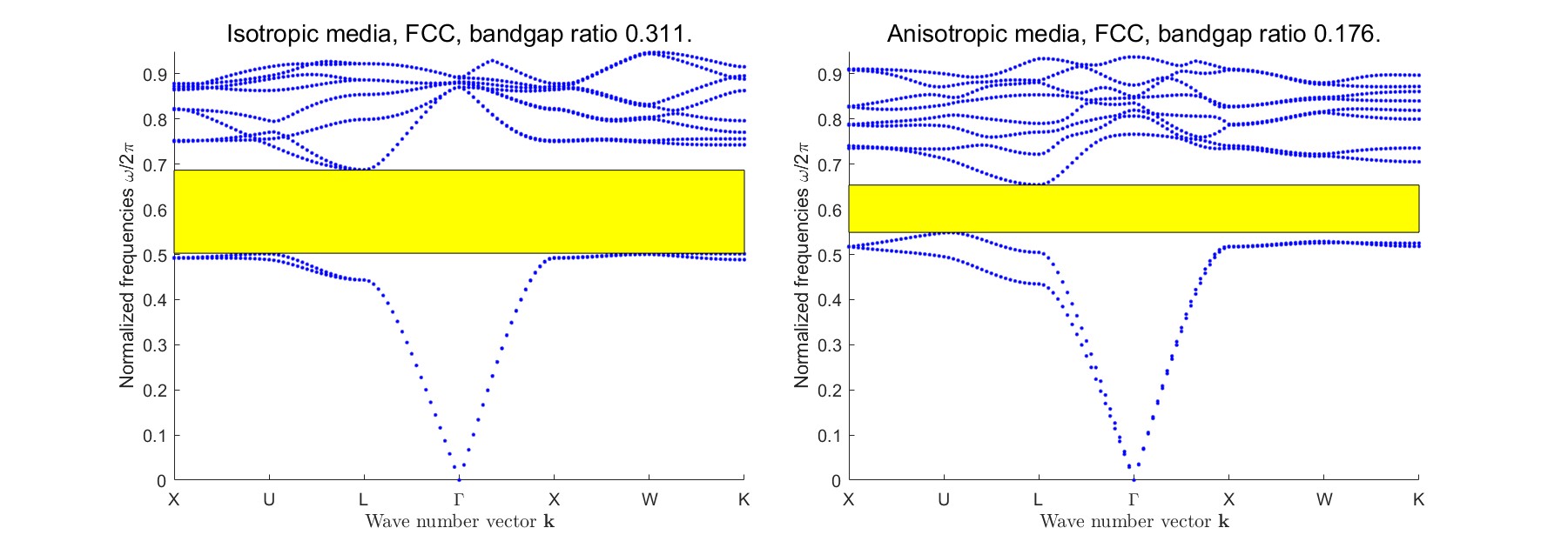}
\caption{Band structures of FCC lattice. Left: isotropic system; 
Right: anisotropic system. Grid division $N=120$.}
\label{fig:band_fcc}
\end{figure}

\begin{figure}
\centering
\includegraphics[width=\linewidth]{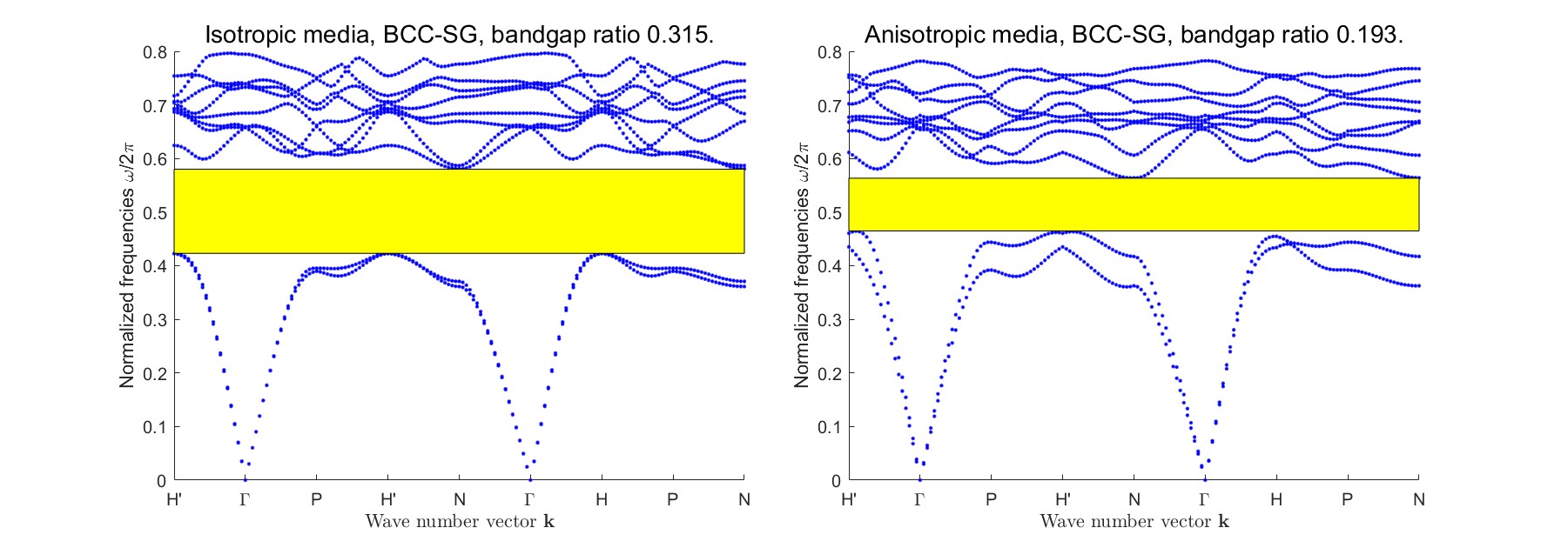}
\caption{Band structures of BCC lattice with single gyroid surfaces. 
Left: isotropic system; Right: anisotropic system. Grid division $N=120$.}
\label{fig:band_bccsg}
\end{figure}

\begin{figure}
\centering
\includegraphics[width=\linewidth]{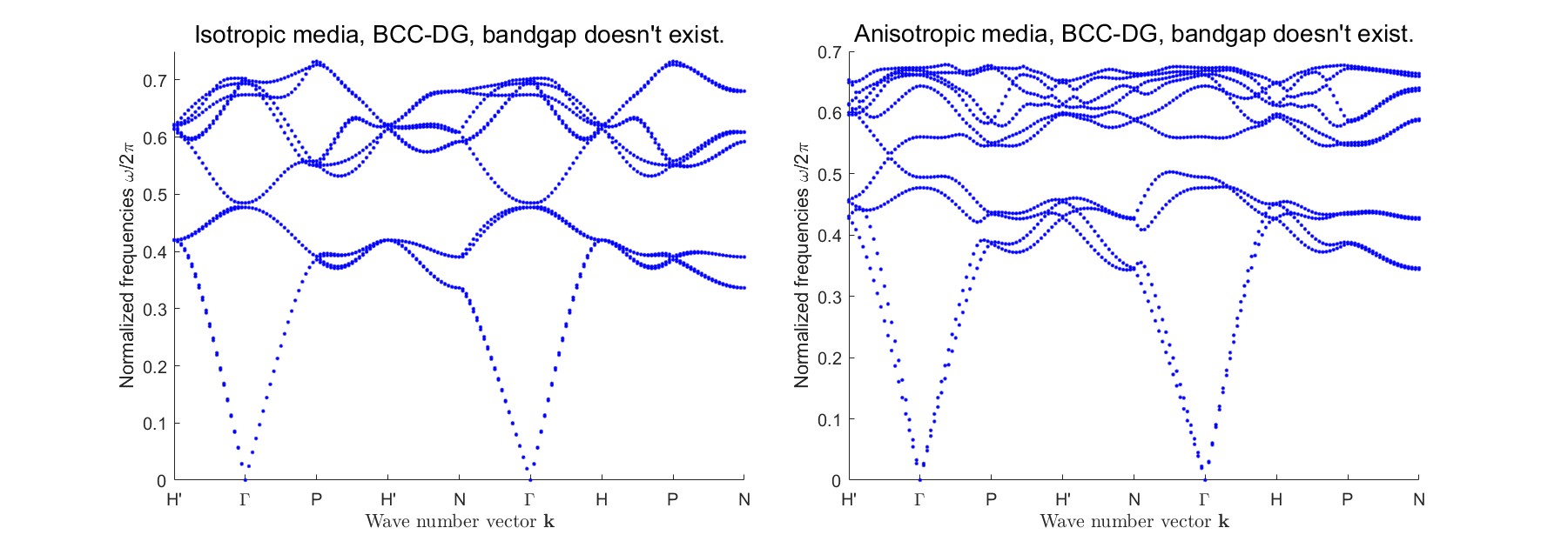}
\caption{Band structures of BCC lattice with double gyroid surfaces. 
Left: isotropic system; Right: anisotropic system. Grid division $N=120$.}
\label{fig:band_bccdg}
\end{figure}

We also compute complete band structures of the above models with 
$\Mcal_\varepsilon=\mattrivial$. It is hard to perceive 
the difference directly if both band structures are plotted on the 
same figure. Instead we present their numerical difference in 
Table \ref{tab:frequency_difference}. The maximum and mean relative 
difference $\Delta \omega_{\max}$, $\Delta \omega_{\text{mean}}$ are defined by
\begin{align}\label{display:exp_omegadiff}
\Delta \omega_{\max}=\max\limits_{\wnk,j}\left\{\frac{|\omega_{h,j}^{(1)}(\wnk)
-\omega_{h,j}^{(2)}(\wnk)|}{|\omega_{h,j}^{(2)}(\wnk)|}\right\},\quad \Delta \omega_{\text{mean}}=\mathop{\text{mean}}\limits_{\wnk,j}
\left\{\frac{|\omega_{h,j}^{(1)}(\wnk)-\omega_{h,j}^{(2)}(\wnk)|}{|\omega_{h,j}^{(2)}(\wnk)|}\right\},
\end{align}
where $\omega_{h,j}^{(1)}(\wnk),\omega_{h,j}^{(2)}(\wnk)$ denote the $j$-th 
frequency at the wave number vector $\wnk$ and grid size $h$. 
The superscripts $(1)$, $(2)$ correspond to permittivity matrix $\mattrivial$ 
and $\matcrossdof$, respectively. As is shown in Table \ref{tab:frequency_difference}, 
the difference is rather minor.

\begin{table}[htbp]
\setlength{\tabcolsep}{3pt}
\centering
\caption{Maximum and mean relative difference of frequencies computed with two types 
of $\Mcal_\varepsilon$ in Subsection \ref{sse:num:band} over the complete band.}
\label{tab:frequency_difference}
\begin{tabular}{|c|c|c|c|c|}
\hline
Lattice type & SC-CURV & FCC & BCC-SG & BCC-DG \\
\hline
$\Delta \omega_{\max}$ & $2.93 \times 10^{-3}$ & $2.76 \times 10^{-3}$ 
& $4.71 \times 10^{-3}$ & $4.72 \times 10^{-3}$ \\
\hline
$\Delta \omega_{\text{mean}}$ & $9.88 \times 10^{-4}$ & $1.08 \times 10^{-3}$ 
& $1.47 \times 10^{-3}$ & $1.08 \times 10^{-3}$ \\
\hline
\end{tabular}
\end{table}

\subsection{Precision Tests}\label{sse:num:highord}
Tables \ref{tab:num:convergence_type1} and \ref{tab:num:convergence_type2} present the first 8 smallest non-zero frequencies of the BCC-SG lattice in anisotropic media, computed with mesh resolutions $N=16, 32, 64,128$ and a 10th-order finite difference stencil. We choose both $\Mcal=\mattrivial$ and $\Mcal=\matcrossdof$ to evaluate the relative errors and the corresponding average convergence rates. For certain cases, both discretization methods achieve approximately second-order convergence, with $\Mcal=\matcrossdof$ yielding slightly higher accuracy than $\Mcal=\mattrivial$. However, for specific eigenvalues (the 3rd and 4th eigenvalues in the tables), the convergence rates may fall short of expectations, which is attributed to the limited mesh resolution or the poor regularity of the eigenfunctions.

\begin{table}[htbp]
\setlength{\tabcolsep}{3pt}
\centering
\caption{BCC-SG lattice in anisotropic media with wave number vector $\wnk=(\pi,\pi,\pi)$, convergence analysis of the first 8 eigenvalues, $\Mcal=\mattrivial$.}
\label{tab:num:convergence_type1}
\begin{tabular}{|c|c|c|c|c|}
\hline
\textbf{Eigenvalue} & \multicolumn{3}{c|}{\textbf{Absolute Difference 
$|\omega^2_h - \omega^2_{2h}|$}} & \textbf{Average} \\
\cline{2-4}
\textbf{Index} & \textbf{$h=1/32$} & \textbf{$h=1/64$} & \textbf{$h=1/128$} & \textbf{Order} \\
\hline
1 & 2.58e-02 & 7.70e-03 & 2.64e-03 & 1.64 \\ \hline
2 & 3.09e-02 & 1.10e-02 & 3.43e-03 & 1.59 \\ \hline
3 & 1.65e-02 & 1.89e-02 & 8.63e-03 & 0.47 \\ \hline
4 & 9.54e-03 & 1.62e-02 & 1.02e-02 & -0.05 \\ \hline
5 & 4.71e-02 & 1.45e-02 & 4.85e-03 & 1.64 \\ \hline
6 & 3.15e-02 & 2.10e-02 & 7.67e-03 & 1.02 \\ \hline
7 & 3.85e-02 & 2.06e-02 & 6.66e-03 & 1.27 \\ \hline
8 & 5.69e-02 & 2.10e-02 & 5.73e-03 & 1.66 \\ \hline
\end{tabular}
\end{table}

\begin{table}[htbp]
\setlength{\tabcolsep}{3pt}
\centering
\caption{BCC-SG lattice in anisotropic media with wave number vector $\wnk=(\pi,\pi,\pi)$, convergence analysis of the first 8 eigenvalues, $\Mcal=\matcrossdof$.}
\label{tab:num:convergence_type2}
\begin{tabular}{|c|c|c|c|c|}
\hline
\textbf{Eigenvalue} & \multicolumn{3}{c|}{\textbf{Absolute Difference 
$|\omega^2_h - \omega^2_{2h}|$}} & \textbf{Average} \\
\cline{2-4}
\textbf{Index} & \textbf{$h=1/32$} & \textbf{$h=1/64$} & \textbf{$h=1/128$} & \textbf{Order} \\
\hline
1 & 2.90e-02 & 8.21e-03 & 2.90e-03 & 1.66 \\ \hline
2 & 3.31e-02 & 1.21e-02 & 3.81e-03 & 1.56 \\ \hline
3 & 1.91e-02 & 2.30e-02 & 1.08e-02 & 0.41 \\ \hline
4 & 1.39e-02 & 1.58e-02 & 1.27e-02 & 0.06 \\ \hline
5 & 5.01e-02 & 2.16e-02 & 6.28e-03 & 1.50 \\ \hline
6 & 4.01e-02 & 2.59e-02 & 9.60e-03 & 1.03 \\ \hline
7 & 5.10e-02 & 2.43e-02 & 8.81e-03 & 1.27 \\ \hline
8 & 7.02e-02 & 2.55e-02 & 6.14e-03 & 1.76 \\ \hline
\end{tabular}
\end{table}

To verify the accuracy of the high-order stencils, we construct a test case where the dielectric coefficient $\varepsilon$ is smooth with respect to $\vec{x}$:
\begin{align}\label{display:num:testsmooth}
    \varepsilon(x,y,z)=8.9\sin(2\pi(x+y+z))+13.
\end{align}
Table \ref{tab:num:convergence_highord} presents the convergence results for the first eight eigenvalues, computed using a 10th-order finite difference stencil with $m=5$ at the wave vector $\wnk=(\pi,\pi,\pi)$. As shown in the table, the high-order finite difference stencils successfully achieve the theoretical convergence rate when the coefficients are smooth. Similar results hold for stencils of orders 2, 4, 6, and 8: these results are omitted here for simplicity.
\begin{table}[htbp]
\setlength{\tabcolsep}{6pt}
\centering
\caption{SC lattice where dielectric coefficient $\varepsilon=\varepsilon(\vec x)$ is smooth \eqref{display:num:testsmooth}, wave number vector $\wnk=(\pi,\pi,\pi)$, convergence analysis of the first 8 eigenvalues.}
\label{tab:num:convergence_highord}
\begin{tabular}{|c|c|c|c|c|}
\hline
\textbf{Eigenvalue} & \multicolumn{3}{c|}{\textbf{Absolute Difference 
$|\omega^2_h - \omega^2_{2h}|$}} & \textbf{Average} \\
\cline{2-4}
\textbf{Index} & \textbf{$h=1/32$} & \textbf{$h=1/64$} & \textbf{$h=1/128$} & \textbf{Order} \\
\hline
1 & 4.87e-09 & 5.36e-12 & 6.22e-15 & 9.79 \\ \hline
2 & 4.87e-09 & 5.36e-12 & 7.41e-15 & 9.66 \\ \hline
3 & 1.97e-08 & 2.23e-11 & 2.53e-14 & 9.79 \\ \hline
4 & 1.97e-08 & 2.23e-11 & 3.14e-14 & 9.63 \\ \hline
5 & 1.97e-08 & 2.23e-11 & 3.31e-14 & 9.59 \\ \hline
6 & 3.60e-08 & 4.13e-11 & 3.51e-14 & 9.98 \\ \hline
7 & 3.60e-08 & 4.13e-11 & 4.14e-14 & 9.87 \\ \hline
8 & 3.60e-08 & 4.14e-11 & 3.31e-14 & 10.03 \\ \hline
\end{tabular}
\end{table}

\subsection{Computational Performance}\label{sse:num:perf}
In Table \ref{tab:iterations_full} we present iteration counts required to 
achieve convergence for the types of PCs discussed in Subsection \ref{sse:num:band}. 
To be precise, we compute both the average and the standard deviation of iterations at different $\wnk$'s, showing the numbers 
don't vary too much. More iterations are required when computing anisotropic system than that of computing 
isotropic system with same lattice type and tolerance. 
Specifically, $\Mcal_\varepsilon=\matcrossdof$ requires fewer iterations than $\Mcal_\varepsilon=\mattrivial$. 
\begin{table}[htbp]
\centering
\caption{Average iterations (and standard deviation) for computing complete band structure. 
Grid division $N=120$, first 10 eigenpairs are computed with tolerance $10^{-5}$.}
\begin{tabular}{|c|c|c|c|c|}
\hline
\ & SC-CURV & FCC & BCC-SG & BCC-DG \\
\hline
Isotropic & 32.8 (4.0) & 42.2 (6.9) & 55.0 (6.9) & 58.0 (6.6) \\
\hline
Anisotropic $\Mcal_\varepsilon=\mattrivial$  & 52.3 (5.1) & 54.0 (7.4) & 68.7 (8.7) & 76.5 (8.5)\\
\hline
Anisotropic $\Mcal_\varepsilon=\matcrossdof$ & 45.3 (4.2) & 51.3 (7.9) & 65.8 (8.8) & 74.7 (8.1)\\
\hline
\end{tabular}
\label{tab:iterations_full}
\end{table}

Next, we provide a quantitative measure of performance enhancement brought by GPU acceleration. The linear algebra operations are done by NumPy and CuPy packages on CPU and GPU, respectively. From Table \ref{tab:speedup}, 
we observe that GPU acceleration achieves an average speedup of over 40 times. 

\begin{table}[htb]
\caption{CPU and GPU runtime (seconds) for the first 10 eigenvalues of SC-CURV, FCC, BCC-SG, BCC-DG at wave number vector $\wnk=(\pi,\pi,\pi)$, 
grid division $N=100,120$.}
\setlength{\tabcolsep}{2pt}
\centering
\begin{tabular}{|c|c|c|c|c|c|c|c|c|}
\hline
\multirow{2}{*}{DoFs}  & \multicolumn{4}{c|}{\textbf{Isotropic SC-CURV}} & \multicolumn{4}{c|}{\textbf{Anisotropic SC-CURV}} \\
\cline{2-9}
& Steps & GPU time & CPU time & Speed up & Steps & GPU time & CPU time & Speed up \\
\hline
$3\times 100^3$ & 31 & 10.79 & 432.71 & 40.10 & 50 & 16.67 & 666.39 & 39.98 \\
\hline
$3\times 120^3$ & 34 & 19.85 & 835.91 & 42.11 & 49 & 28.67 & 1297.50 & 45.26 \\
\hline
\multirow{2}{*}{DoFs}  & \multicolumn{4}{c|}{\textbf{Isotropic FCC}} & \multicolumn{4}{c|}{\textbf{Anisotropic FCC}} \\
\cline{2-9}
& Steps & GPU time & CPU time & Speed up & Steps & GPU time & CPU time & Speed up \\
\hline 
$3\times 100^3$ &  45 & 16.00 & 623.77 & 39.01 & 56 & 19.59 & 852.17 & 43.50 \\
\hline
$3\times 120^3$ & 45 & 27.71 & 1197.90 & 43.23 & 55 & 34.15 & 1506.35 & 44.11 \\
\hline  
\multirow{2}{*}{DoFs}  & \multicolumn{4}{c|}{\textbf{Isotropic BCC-SG}} & \multicolumn{4}{c|}{\textbf{Anisotropic BCC-SG}} \\
\cline{2-9}
& Steps & GPU time & CPU time & Speed up & Steps & GPU time & CPU time & Speed up \\
\hline
$3\times 100^3$ &  57 & 19.85 & 835.33 & 42.08 & 68 & 24.14 & 1020.15 & 42.26 \\
\hline
$3\times 120^3$ & 47 & 27.96 & 1295.39 & 46.33 & 65 & 41.08 & 1816.56 & 44.22 \\
\hline
\multirow{2}{*}{DoFs}  & \multicolumn{4}{c|}{\textbf{Isotropic BCC-DG}} & \multicolumn{4}{c|}{\textbf{Anisotropic BCC-DG}} \\
\cline{2-9}
& Steps & GPU time & CPU time & Speed up & Steps & GPU time & CPU time & Speed up \\
\hline 
$3\times 100^3$ & 89 & 26.83 & 1056.57 & 39.98 & 77 & 25.25 & 1038.00 & 41.11 \\
\hline
$3\times 120^3$ & 86 & 44.61 & 1903.39 & 43.20 & 76 & 43.55 & 1971.94 & 45.28 \\
\hline       
\end{tabular}

\label{tab:speedup}
\end{table}

\subsection{Robustness under Extreme Cases}\label{sse:num:custom}
We construct an ill-conditioned $\varepsilon_1$ as $\varepsilon_1=U\diag(10^{-1},10^{-3},10^{-5})U^\conjT$, 
where $U$ is a randomly generated unitary matrix. Under the setting of SC-CURV lattice, 
the system is computed with both $\Mcal_\varepsilon=\mattrivial$ 
and $\Mcal_{\varepsilon}=\matcrossdof$. 
The dampening history of the residuals is documented in Figure \ref{fig:extreme1}, where we observe a smooth convergence. Moreover, $\matcrossdof$ has better numerical performance in avoiding stagnation under extreme cases.

This customized matrix $\varepsilon_1$, in general, does not satisfy Assumption \ref{display:proof_assump_sdd} or \ref{display:proof_assump_nonzero}, which further shows the robustness of our scheme.

\begin{figure}
\centering
\includegraphics[width=0.85\linewidth]{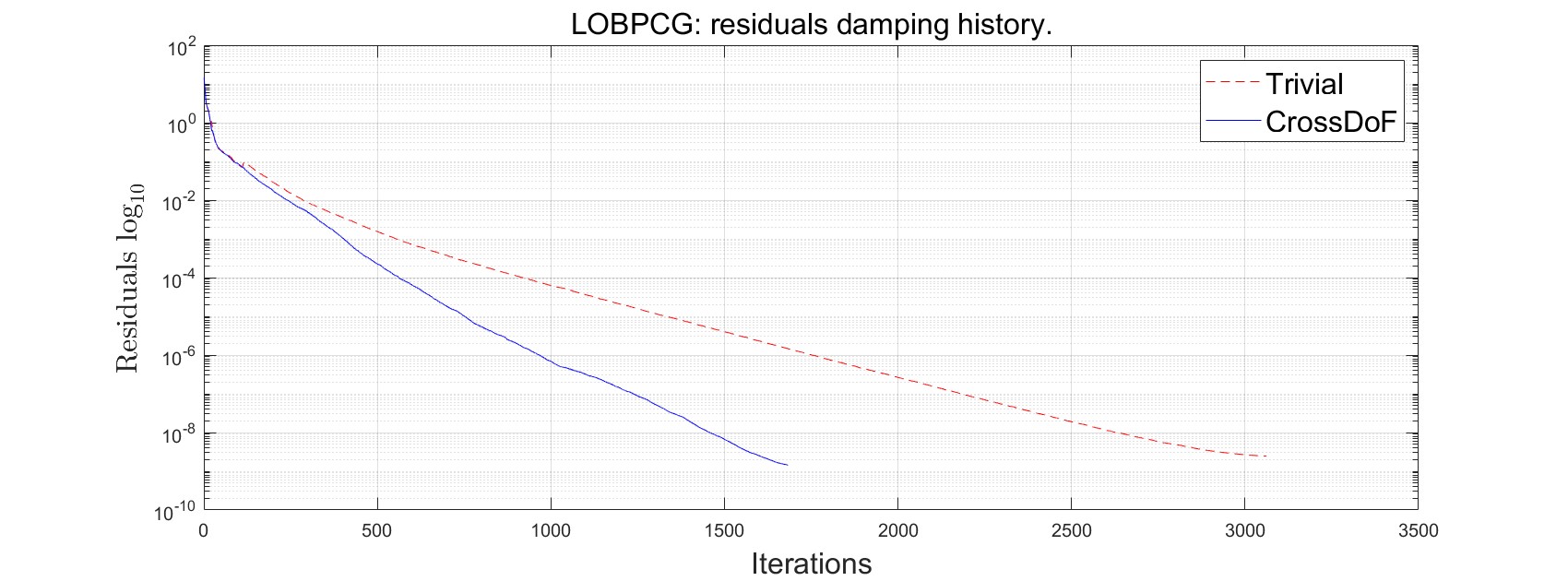}
\caption{Convergence history where $\varepsilon_1$ is very ill-conditioned. 
Anisotropic SC-CURV lattice at $\wnk=(\frac{1}{7},\frac{3}{5},\frac{4}{13})\pi$, 
grid division $N=120$.}
\label{fig:extreme1}
\end{figure}

\section{Conclusions}\label{se:con}
In this paper, we have presented a robust, GPU-accelerated framework for solving 3D
Maxwell eigenproblems in photonic crystals with anisotropic media. Our key innovations
include a novel, provably positive definite discretization for the anisotropic permittivity 
tensor and a rigorous theoretical completion of the kernel compensation method, which now includes a precise characterization of the null space $\Hcal$, an explicit rule for selecting the penalty 
coefficient $\gamma$, an approach to matrix-free operations via 3D DFT and extensions to high order discretizations. Numerical experiments on various lattice structures demonstrate practicality and efficiency, 
where an average speedup of more than $40\times$ is achieved on GPU. An ill-conditioned 
example also shows the robustness. Our future work will focus on extending this powerful 
framework to more complex, nonlinear
eigenvalue problems in photonic crystals.

\bibliographystyle{abbrv}
\bibliography{cas-refs}{}

\end{document}